\newcommand{\ignore}[1]{}
\newcommand{\vek}[1]{\mathchoice{\displaystyle\boldsymbol#1}
{\textstyle\boldsymbol#1}{\scriptstyle\boldsymbol#1}
{\scriptscriptstyle\boldsymbol#1}}
\newcommand{\mat}[1]{\mathchoice{\displaystyle\mathbf#1}
{\textstyle\mathbf#1}{\scriptstyle\mathbf#1}
{\scriptscriptstyle\mathbf#1}}
\newtheorem{lemma}{Lemma}
\newtheorem{theorem}{Theorem}
\newtheorem{proposition}[theorem]{Proposition}
 \title{Pivoted Cholesky decomposition by cross approximation for efficient solution of  kernel systems } 
\author{Dishi Liu and Hermann G.  Matthies}    
\begin{document}       

\maketitle             

\begin{abstract}

Large kernel systems are prone to be ill-conditioned. Pivoted Cholesky decomposition (PCD)  render a stable and efficient solution  to the systems  without a perturbation of regularization. This paper proposes a new PCD  algorithm by tuning Cross Approximation (CA) algorithm to kernel matrices which merges the merits of PCD and CA, and proves as well as numerically exemplifies that it solves large kernel systems  two-order  more efficiently than those resorts to regularization.    As a by-product, a diagonal-pivoted CA technique is also shown  efficient in eigen-decomposition of large covariance matrices in  an uncertainty quantification problem.

\end{abstract}

\section{Introduction}

Interpolation of spatial data with kernel functions \cite{Schaback1999} finds wide application in many fields of scientific computing. The system of equations is prone to be ill-conditioned when a large number of data or smooth kernel functions are used in hope  of improving accuracy. Nevertheless a good interpolation accuracy is usually accompanied by a large condition number of the interpolation matrix \cite{Schaback1995}.
This is like a high-wire walk between two abysses, bad accuracy on the left and numerical instability on the right, no wonder various regularization techniques \cite{Neumaier1998} are usually adopted as  safety ropes. In this paper we show pivoted Cholesky decomposition, a matrix approximation technique, is an  efficient solver in this situation which guarantees the stability without the perturbation of regularization while reduces the complexity of solution from $\C O(n^3)$ to $\C O(k^2 n)$ for a rank-$k$ system. We propose an improved pivoted Cholesky decomposition algorithm by tuning cross approximation technique to  symmetric and positive definite  matrices.
%

Pivoted Cholesky decomposition (PCD)\cite{Dongarra1979,Fine2001,Hammarling2007,Harbrecht2012} and cross approximation  \cite{bebendorf2000,bebendorf2003} are two favourable low rank approximation techniques with linear complexity in $n$ (in contrast to the cubic complexity of a truncated singular value decomposition). Compared to other  techniques of linear complexity like the Nystr\"{o}m approximation \cite{Williams2001,Drineas2005} and sparse greedy approximations \cite{Smola2000} they are more accurate due to the pivot maximizations \cite{Bach2013}. Another advantage is that they provide deterministic error bounds and  thus can be run adaptively to a certain accuracy.

 
 PCD is a pivoted version of the outer product Cholesky algorithm \cite[Algorithm 4.2.2]{Golub1996} that chooses the diagonal entry with the largest modulus   as the pivot. 
 In each step a rank-1 approximation is made based on the column and row cross at the pivot so that $k$ steps accumulate  a rank-$k$ approximation. 
 Earlier versions of this algorithm, e.g. LINPACK routine SCHDC \cite{Dongarra1979},  include a global updating  in which the rank-1 approximation is subtracted from the remainder matrix (or the original matrix for the first step), this made them more expensive than the later ones in \cite{Fine2001,Bach2003,Bach2005,Hammarling2007,Harbrecht2012}\footnote{In \cite{Bach2003,Bach2005} this method was introduced by the name of \em{incomplete Cholesky decomposition}  } in which the updating only occurs in the pivoted columns.

When cross approximation (CA) is applied to symmetric positive  definite matrices it produces similar results as a PCD, i.e. the results are only a row permutation away from  triangular matrices. If the pivot is chosen only on the diagonal, the fully pivoted CA and the early version of PCD are nearly identical with the only difference in the permutation. Such a CA was proposed in \cite{Savostyanov2009} to produce a diagonalised decomposition.  While choosing the pivot on the diagonal in PCD is justified in \cite{Bach2005} as maximising the lower bound of gain in each step,  it is not yet fully justified in the language of CA.  The CA algorithm, with the merit of being simpler (without row swapping and nested entry indexing) and providing sharp error bound in uniform norm,  can be adapted to a PCD if this gap is bridged.       
 
In this work we first bridge this gap and merge the merits of the two regimes into a new PCD algorithm which is simpler and gives sharp uniform-norm error bound, and then justify the validity and efficiency advantage of the algorithm in solving ill-conditioned kernel systems. 

 The rest of this paper is organized as follows. In Section~\ref{sec:CA}  we recall the basic Cross Approximation algorithm and introduce an adaption to symmetric positive definite matrices. In Section~\ref{sec:PCD} we extend the adapted CA to PCD.  In Section~\ref{sec:kernel} we justify the PCD solution to rank deficient kernel systems. Section~\ref{sec:applications} gives applications of the algorithms. Section~\ref{sec:summary} summarizes the whole paper.

 \section{Cross approximation} \label{sec:CA}
Cross approximation (CA) is   an iterative matrix approximation technique  \cite{bebendorf2000,bebendorf2003} that yields a factorization equivalent to a skeleton decomposition \cite[Lemma 3]{bebendorf2000}.
Like in many other methods,  CA constructs a rank-$k$ approximation by using only a small portion, i.e.  some $k$ rows and $k$ columns, of the matrix.  Hence it is widely used for data compression \cite{Litvinenko2013,Espig2012}. It is justified in \cite{goreinov2001} that  the   columns and rows should be such chosen that their intersection has the largest determinant   in modulus among all $k \times k$ submatrices (the \textit{maximal-volume principle}), and it is shown in \cite{goreinov2011}   that such an approximation is quasi-optimal in the uniform norm.   
 
 In each step of CA, a rank-1 approximation is made and subtracted  from the remainder matrix. It turns out that  choosing the pivot  (the intersection of the chosen column and row that form the rank-1 approximation) as the entry with the largest modulus in the remainder matrix is the optimal strategy in term of maximal-volume principle if we keep the pivot  points in the previous steps fixed (like in CA) \cite[Lemma 2]{bebendorf2000} .

For very large matrices, CA with global pivot searching and global updating  (which is termed as fully pivoted CA) is expensive  or even prohibitive. A trade-off of accuracy and cost is made in the so-called partially pivoted CA which only searches for the pivot in some chosen rows and columns. 

However, if the underlying matrix is symmetric positive semi-definite (SPSD) much of the cost of a fully pivoted CA can be saved without sacrificing accuracy. This is due to  the fact that the diagonal entries of SPSD matrices always include the maximum in modulus, and that the remainder matrices   can be kept SPSD during the CA steps. 
 
A SPSD matrix-adapted  CA is proposed in this section. This algorithm has a complexity linear in the matrix size  as does the partially pivoted CA, while retains the accuracy of a fully pivoted CA.   We start by introducing the ``baseline'' ---  the  fully pivoted CA algorithm.

\subsection{Notation}

Matlab-like notation is used, i.e. a matrix $\mat A$'s   $i$-th column and $j$-th row are written as $\mat A[:,i]$ and $\mat A[j,:]$ respectively, but for an identity matrix these are written as  $e_i$ and $e_j^\top$. We use the same notations for different sizes wherever  the actual size is clear by
the context. 
 
 \subsection{The basic cross approximation algorithm}
 CA algorithms produce a factorization $\mat A \mat B$ approximating a matrix $\mat M$, i.e. $ \mat A \mat B \approx \mat M$.   Fully pivoted CA (as detailed in Algorithm \ref{a1}) is the most basic, accurate and also the most expensive  one since it makes global pivot searching and global updating of remainder matrix. It yields a    factorization $\mat A \mat B$ with a user specified rank $k$ or a   maximum entry-wise error $\varepsilon$.  
 \begin{algorithm}[h!] 
    \SetKwInOut{Input}{Input}
    \SetKwInOut{Output}{Output}
   \caption{Fully pivoted Cross Approximation}
     \Input{$\mat M \in \D R^{m \times n}$,  $k \in \D Z,  1 \leq k\leq \min(m,n)$ (or $\epsilon_{tol} \in \D R $ for an adaptive version) }
     \Output{$\mat A \in \D R^{m \times k} $, $\mat B \in \D R^{k \times n} $ and $\varepsilon \in \D R$ such that $| \mat M-\mat A \mat B |_\infty \leq \varepsilon$}
  \begin{algorithmic}[1]\label{a1}
     \medskip
     
     \STATE $ \mat R_1:= \mat M, \; \ell :=1$
     
     \WHILE{ $\ell \leq k$  and $\varepsilon > 0$ (or  \bf while $\varepsilon >  \epsilon_{tol})$ }  
     \STATE $i_\ell, j_\ell := \mbox{argmax}_{i,j} \left |\mat R_\ell[i,j]\right  |$
     \STATE $  \gamma_\ell :=  \mat R_\ell[i_\ell, j_\ell]  $    , $\varepsilon := \gamma_\ell$
    \STATE $ \mat A[:, \ell] :=  \mat R_\ell[:, j_\ell]  $
      \STATE   $ \mat B[\ell, :] :=   \mat R_\ell[ i_\ell, :] / \gamma_\ell$
     \STATE $ \mat R_{\ell+1} := \mat R_{\ell} - \mat A[:, \ell] \mat B[\ell, :]   $    
     \STATE $\ell := \ell+1$
     \ENDWHILE 
 \end{algorithmic}   
 \end{algorithm}
 
The following Lemma shows the  $\mat A \mat B$ equals to a  pseudo-skeleton decomposition \cite{goreinov1997} of $\mat M$.   
\begin{lemma}\cite[Lemma 4.6]{boerm2003}   \label{lemma0} Given  matrices $\mat A $ and $\mat B$ as obtained in  Algorithm \ref{a1} approximating $\mat M$,   where $\vek i = \{i_\ell \}_{\ell=1}^k$ and  $\vek j = \{j_\ell \}_{\ell=1}^k$ collect the  pivot  indices $i_\ell$ and $j_\ell$ in Algorithm \ref{a1}, then
  \begin{align}
    \mat A \mat B =  \mat M[:, \vek j] \cdot \mat M[\vek i, \vek j]^{-1} \cdot \mat M[\vek i, :] 
 \end{align}
 
\end{lemma}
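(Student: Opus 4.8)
The plan is to verify the identity $\vek A\vek B=\vek M[:,\vek j]\cdot\vek M[\vek\beta,\vek j]^{-1}\cdot\vek M[\vek\beta,:]$ by induction on the rank $k$, exploiting the fact that the right-hand side is precisely the pseudo-skeleton (pseudo-inverse-free) approximation determined by the selected row set $\vek\beta$ and column set $\vek j$. The base case $k=1$ is immediate: after one step $\vek A[:,1]=\vek R_1[:,j_1]=\vek M[:,j_1]$, $\gamma_1=\vek M[i_1,j_1]=\vek M[\vek\beta,\vek j]$ (a $1\times1$ matrix), and $\vek B[1,:]=\vek M[i_1,:]/\gamma_1=\vek M[\vek\beta,\vek j]^{-1}\vek M[\vek\beta,:]$, so $\vek A\vek B=\vek M[:,j_1]\,\gamma_1^{-1}\,\vek M[i_1,:]$ as claimed.

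For the inductive step, the key structural observation is the one already recorded before Lemma~\ref{lemma0}: in Algorithm~\ref{a1} the remainder $\vek R_{\ell+1}=\vek M-\vek A\vek B$ (with $\vek A,\vek B$ truncated to their first $\ell$ columns/rows) is exactly the Schur complement of $\vek M[\vek\beta_\ell,\vek j_\ell]$ in $\vek M$ restricted appropriately, i.e. $\vek R_{\ell+1}=\vek M-\vek M[:,\vek j_\ell]\,\vek M[\vek\beta_\ell,\vek j_\ell]^{-1}\,\vek M[\vek\beta_\ell,:]$. Granting this by the induction hypothesis, I would then add the $(\ell+1)$-st step: the new column is $\vek A[:,\ell+1]=\vek R_{\ell+1}[:,j_{\ell+1}]$, the new pivot is $\gamma_{\ell+1}=\vek R_{\ell+1}[i_{\ell+1},j_{\ell+1}]$, and the new row is $\vek B[\ell+1,:]=\vek R_{\ell+1}[i_{\ell+1},:]/\gamma_{\ell+1}$. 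One then shows that appending this rank-one term to the previous pseudo-skeleton yields the pseudo-skeleton for the enlarged index sets $\vek\beta_{\ell+1}=\vek\beta_\ell\cup\{i_{\ell+1}\}$, $\vek j_{\ell+1}=\vek j_\ell\cup\{j_{\ell+1}\}$. This is a purely algebraic block-matrix identity: it is the statement that the inverse of a $2\times2$ block matrix built from $\vek M[\vek\beta_{\ell+1},\vek j_{\ell+1}]$, expanded via the Schur complement $\gamma_{\ell+1}$ of the top-left block $\vek M[\vek\beta_\ell,\vek j_\ell]$, reproduces exactly the "old pseudo-skeleton plus new rank-one correction built from the current remainder" pattern.

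Concretely, the one computation worth doing carefully is the Schur-complement expansion of $\vek M[\vek\beta_{\ell+1},\vek j_{\ell+1}]^{-1}$. Writing $\vek G=\vek M[\vek\beta_\ell,\vek j_\ell]$, $\vek u=\vek M[\vek\beta_\ell,j_{\ell+1}]$, $\vek v^\top=\vek M[i_{\ell+1},\vek j_\ell]$, and noting $\gamma_{\ell+1}=\vek M[i_{\ell+1},j_{\ell+1}]-\vek v^\top\vek G^{-1}\vek u$ is the Schur complement, the block-inverse formula gives the four blocks of $\vek M[\vek\beta_{\ell+1},\vek j_{\ell+1}]^{-1}$ in terms of $\vek G^{-1}$ and $\gamma_{\ell+1}^{-1}$; substituting this into $\vek M[:,\vek j_{\ell+1}]\cdot\vek M[\vek\beta_{\ell+1},\vek j_{\ell+1}]^{-1}\cdot\vek M[\vek\beta_{\ell+1},:]$, splitting the $\vek M[:,\vek j_{\ell+1}]$ and $\vek M[\vek\beta_{\ell+1},:]$ factors into their "old" and "new" parts, and collecting terms should reproduce $\vek M[:,\vek j_\ell]\vek G^{-1}\vek M[\vek\beta_\ell,:]$ plus exactly $\vek A[:,\ell+1]\,\vek B[\ell+1,:]$ once one recognizes the combinations $\vek M[:,j_{\ell+1}]-\vek M[:,\vek j_\ell]\vek G^{-1}\vek u$ and $\vek M[i_{\ell+1},:]-\vek v^\top\vek G^{-1}\vek M[\vek\beta_\ell,:]$ as the $j_{\ell+1}$-column and $i_{\ell+1}$-row of the current remainder $\vek R_{\ell+1}$.

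The main obstacle is bookkeeping rather than mathematics: one must track the permutation implicit in "appending" an index to $\vek\beta_\ell$ or $\vek j_\ell$ (the new index need not be the largest), and one must be careful that the pivot $\gamma_{\ell+1}$ computed from the remainder coincides with the Schur complement of $\vek G$ inside $\vek M[\vek\beta_{\ell+1},\vek j_{\ell+1}]$ — this is exactly where the inductive description of $\vek R_{\ell+1}$ as a Schur complement is used, and where one must also confirm that $\vek M[\vek\beta_{\ell+1},\vek j_{\ell+1}]$ is invertible (equivalently $\gamma_{\ell+1}\neq0$), which holds as long as the algorithm has not terminated. Since the result is quoted as \cite[Lemma 4.6]{boerm2003}, an alternative is simply to invoke that reference; but the inductive Schur-complement argument above is self-contained and is the natural route if a proof is to be included.
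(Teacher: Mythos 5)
Your argument is correct, but note that the paper itself offers no proof of this lemma at all: it is stated with the citation \cite[Lemma 4.6]{boerm2003} and the burden is placed entirely on that reference. Your inductive Schur-complement argument is the standard proof of this fact (and essentially the one in the cited source): the base case is right, the key identity is the block-inverse expansion of $\vek M[\vek\beta_{\ell+1},\vek j_{\ell+1}]^{-1}$ about the Schur complement $\gamma_{\ell+1}=\vek M[i_{\ell+1},j_{\ell+1}]-\vek v^\top\vek G^{-1}\vek u$, and collecting the four non-leading terms into $\gamma_{\ell+1}^{-1}\bigl(\vek M[:,j_{\ell+1}]-\vek M[:,\vek j_\ell]\vek G^{-1}\vek u\bigr)\bigl(\vek M[i_{\ell+1},:]-\vek v^\top\vek G^{-1}\vek M[\vek\beta_\ell,:]\bigr)$ does reproduce exactly the new rank-one term $\vek A[:,\ell+1]\vek B[\ell+1,:]$, since those two factors are the $j_{\ell+1}$-column and $i_{\ell+1}$-row of $\vek R_{\ell+1}$. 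Two small points to tighten if you write it out in full: (i) step 7 of Algorithm~\ref{a1} as printed reads $\vek R_{\ell+1}:=\vek R_\ell-\vek A\vek B$, which you must read as subtracting only the newly added rank-one term (equivalently $\vek R_{\ell+1}=\vek M-\vek A\vek B$ with $\vek A,\vek B$ truncated to $\ell$ columns/rows) for your induction hypothesis to apply; and (ii) the bookkeeping concern about index ordering dissolves because the skeleton expression $\vek M[:,\vek j]\,\vek M[\vek\beta,\vek j]^{-1}\,\vek M[\vek\beta,:]$ is invariant under any simultaneous reordering of $\vek\beta$ and of $\vek j$ (the permutation matrices cancel), so one may always regard the new indices as appended last. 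Invertibility of $\vek M[\vek\beta,\vek j]$ follows from $\det\vek M[\vek\beta,\vek j]=\prod_\ell\gamma_\ell\neq 0$, i.e.\ Equation~(\ref{e1}), valid while the algorithm has not terminated. What your route buys over the paper's bare citation is a self-contained verification; it sacrifices nothing, since it is the same mechanism the reference uses.
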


The determinant of the submatrix $\mat M[\vek i, \vek j]$ can be computed  conveniently   by 
  \begin{align}
   \det ( \mat M[\vek i, \vek j]) = \prod_{\ell=1}^k \gamma_\ell   \label{e1}
 \end{align} with $\gamma_\ell  $ as defined in Algorithm~\ref{a1}.
 
The maximal-volume principle \cite{goreinov2001} suggests that the optimal pivoting strategy is to choose $\vek i$ and $\vek j$ so that the determinant of  $ \mat M[\vek i, \vek j]$ is   maximal in modulus among all $k \times k$ submatrices. So by (\ref{e1}) maximizing $\gamma_\ell$ is a ``greedy'' strategy which maximizes the gain in each step.

This fully pivoted CA algorithm is of  complexity  $\C O(kmn)$ which is quadratic in matrix size.  A linear complexity $\C O(k^2(m+n))$  can be achieved by the partially pivoted CA   which searches only submaximal $\gamma_\ell$'s in the last pivoted row and column. This generally would sacrifice the accuracy of approximation.

 \subsection{Tuned to symmetric positive semi-definite  matrices}
A linear complexity can also be achieved without sacrificing accuracy if $\mat M$  is a  symmetric positive semi-definite (SPSD) matrix. This is due to the  fact that a SPSD matrix always have a maximum in modulus in the diagonal, and that the remainder matrix $\mat R$ can be kept SPSD during the CA process. We start from some well known facts as stated in the following two lemmas.
\begin{lemma}  \cite[p.147]{Golub1996} \label{lemma1}
If the matrix $\mat M$ is SPSD, then 
\begin{enumerate}[(i)]
 \item its diagonal entries are non-negative. 
 \item the global maximum in modulus is on the diagonal. 
\end{enumerate} 
\end{lemma}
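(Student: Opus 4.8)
The plan is to prove both statements directly from the definition of a symmetric positive semi-definite matrix, namely that $\vek M = \vek M^\top$ and $\vek x^\top \vek M \vek x \geq 0$ for all $\vek x$.

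For part (i), I would simply test the quadratic form on the standard basis vectors. Taking $\vek x = e_i$, one gets $e_i^\top \vek M e_i = \vek M[i,i] \geq 0$, so every diagonal entry is non-negative. This also shows that ``modulus'' of a diagonal entry is just the entry itself, a fact I will use in part (ii).

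For part (ii), the claim is that $\max_{i,j} |\vek M[i,j]|$ is attained at some diagonal position; equivalently, $|\vek M[i,j]| \leq \max_\ell \vek M[\ell,\ell]$ for all $i,j$. The standard trick is to test the quadratic form on $\vek x = e_i \pm e_j$ for $i \neq j$: expanding gives $0 \leq (e_i \pm e_j)^\top \vek M (e_i \pm e_j) = \vek M[i,i] + \vek M[j,j] \pm 2\vek M[i,j]$, using symmetry to combine the two off-diagonal contributions. Rearranging the two inequalities yields $|\vek M[i,j]| \leq \tfrac12(\vek M[i,i] + \vek M[j,j]) \leq \max_\ell \vek M[\ell,\ell]$, which by part (i) equals $\max_\ell |\vek M[\ell,\ell]|$. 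Hence the global maximum in modulus over all entries cannot exceed the maximum over diagonal entries, and since diagonal entries are themselves entries, the maximum is attained on the diagonal.

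I do not anticipate a genuine obstacle here; the only thing to be careful about is the degenerate bookkeeping — ensuring the argument still reads correctly when the matrix is $1\times 1$ (part (ii) is then vacuous, with the diagonal being the whole matrix), and being explicit that the $2\times 2$ principal submatrix inequality $\vek M[i,i]\vek M[j,j] \geq \vek M[i,j]^2$ (an equivalent route via non-negativity of $2\times 2$ principal minors) could be used instead of the $e_i \pm e_j$ test if one prefers. I would present the $e_i \pm e_j$ version since it is the most self-contained and does not require invoking the principal-minor characterization of SPSD matrices.
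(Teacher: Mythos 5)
Your proposal is correct and follows essentially the same route as the paper: part (i) via the quadratic form on $e_i$, and part (ii) via the test vectors $e_i \pm e_j$ giving $\vek M[i,i]+\vek M[j,j]\pm 2\vek M[i,j]\geq 0$. Your write-up is in fact slightly more explicit than the paper's in spelling out the rearrangement $|\vek M[i,j]|\leq \tfrac12(\vek M[i,i]+\vek M[j,j])\leq \max_\ell \vek M[\ell,\ell]$, but there is no substantive difference in approach.
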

%
 \begin{lemma} \label{lemma3}  \cite[Theorem 7.2.5]{Horn2012}
   \cite[p.40]{murota2003} 
If all principal minors of a symmetric matrix are nonnegative (positive semi-definite), the matrix is positive semi-definite.
\end{lemma}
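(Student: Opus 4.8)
The plan is to read the sign pattern of the spectrum of $\vek M$ directly off the coefficients of its characteristic polynomial, exploiting the classical fact that, up to signs, those coefficients are exactly the sums of the principal minors. Concretely, for an $n\times n$ symmetric $\vek M$ I would start from $\det(\lambda\vek I-\vek M)=\lambda^{n}-c_{1}\lambda^{n-1}+c_{2}\lambda^{n-2}-\dots+(-1)^{n}c_{n}$, where $c_{k}$ is the sum of all $k\times k$ principal minors of $\vek M$ (equivalently the $k$-th elementary symmetric function of the eigenvalues of $\vek M$) and $c_{0}:=1$. The hypothesis that every principal minor of $\vek M$ is nonnegative says precisely that $c_{k}\ge 0$ for all $k=0,1,\dots,n$.

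Next I would evaluate the characteristic polynomial at $\lambda=-t$ with $t>0$; collecting signs term by term gives $\det(-t\,\vek I-\vek M)=(-1)^{n}\sum_{k=0}^{n}c_{k}\,t^{\,n-k}$, and since $c_{0}=1$ the bracketed sum is bounded below by $t^{n}>0$. Hence $-t$ is never a root of the characteristic polynomial, i.e. $\vek M$ has no negative eigenvalue. Because $\vek M$ is symmetric its eigenvalues are real and it is orthogonally diagonalizable, so all eigenvalues being nonnegative yields $x^{\top}\vek M\,x\ge 0$ for every $x$, which is the assertion.

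The only real content is the coefficient identity in the first step; once it is in hand (it is the standard diagonal expansion of the determinant, or the elementary-symmetric-function formula for $\det(\lambda\vek I-\vek M)$ applied to the eigenvalues), the argument is immediate. An alternative route that sidesteps even this is a perturbation argument: for $\varepsilon>0$ the $j$-th leading principal minor of $\vek M+\varepsilon\vek I$ expands as $\varepsilon^{j}$ plus lower-order terms in $\varepsilon$ whose coefficients are sums of principal minors of $\vek M$, hence is $\ge\varepsilon^{j}>0$; by Sylvester's criterion $\vek M+\varepsilon\vek I$ is then positive definite, and letting $\varepsilon\to 0^{+}$ (using that the cone of positive semi-definite matrices is closed, or directly $x^{\top}\vek M\,x=\lim_{\varepsilon\to0^{+}}x^{\top}(\vek M+\varepsilon\vek I)x\ge0$) gives the claim. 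I do not expect a genuine obstacle here: the statement is the classical semi-definite companion of Sylvester's criterion. The one point to be careful about is that it is \emph{all} principal minors, not merely the leading ones, that must be controlled — which is exactly why the proof must pass through the symmetric-function coefficients (or the perturbation above) rather than a naive induction on the leading minors.
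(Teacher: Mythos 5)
Your proof is correct on both routes. Note first that the paper itself gives no proof of this lemma at all --- it is stated as a citation to \cite[Theorem 7.2.5]{Horn2012} and \cite[p.40]{murota2003} --- so you are supplying an argument the paper omits rather than paralleling one. Your primary route, reading the signs of the eigenvalues off the identity $\det(\lambda\vek I-\vek M)=\sum_{k=0}^{n}(-1)^{k}c_{k}\lambda^{n-k}$ with $c_{k}$ the sum of the $k\times k$ principal minors, is in fact essentially the textbook proof in Horn and Johnson: with all $c_{k}\ge0$ and $c_{0}=1$ the polynomial cannot vanish at $\lambda=-t<0$, and symmetry supplies real eigenvalues and orthogonal diagonalizability, so no negative eigenvalue means positive semi-definite. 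The sign bookkeeping $\det(-t\vek I-\vek M)=(-1)^{n}\sum_{k}c_{k}t^{n-k}$ checks out. Your alternative perturbation argument is also sound: every principal minor of a leading principal submatrix of $\vek M$ is a principal minor of $\vek M$, so each leading principal minor of $\vek M+\varepsilon\vek I$ is at least $\varepsilon^{j}>0$, Sylvester's criterion gives positive definiteness, and the closed cone (or the pointwise limit of quadratic forms) finishes it. Your closing caveat is also the right one to flag in the context of this paper: the hypothesis genuinely requires \emph{all} principal minors, not just the leading ones (the matrix $\mathrm{diag}(0,-1)$ has both leading principal minors nonnegative but is not positive semi-definite), and indeed the paper's Proposition~\ref{p1} is careful to verify nonnegativity of all principal minors of the remainder matrix before invoking this lemma.
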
 
And we prove the positive semi-definiteness of the remainder matrix $\mat R$ as follows.  
 \begin{proposition}  \label{p1}
 In approximating a SPSD matrix $\mat M$,  if CA chooses the pivot in the diagonal, the remainder matrix remains SPSD.
\end{proposition}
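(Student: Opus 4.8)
The plan is to argue by induction on the iteration index $\ell$, using Lemma~\ref{lemma3} (characterization of SPSD matrices via nonnegative principal minors) as the engine. Write $\vek R_1 = \vek M$, which is SPSD by hypothesis. Assume $\vek R_\ell$ is SPSD; we must show $\vek R_{\ell+1}=\vek R_\ell - \vek R_\ell[:,j_\ell]\,\vek R_\ell[i_\ell,:]/\gamma_\ell$ is again SPSD, where the pivot is chosen on the diagonal, so $i_\ell = j_\ell =: p$ and $\gamma_\ell = \vek R_\ell[p,p]$. First I would dispose of the degenerate case $\gamma_\ell = 0$: by Lemma~\ref{lemma1}(ii) a zero maximal diagonal entry forces the whole remainder matrix to vanish, so $\vek R_{\ell+1}=\vek R_\ell = 0$ is trivially SPSD and the CA iteration has already terminated. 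Hence we may assume $\gamma_\ell > 0$.

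The main step is to recognize the rank-one update as a Schur-complement / Gaussian-elimination step with respect to the pivot row and column $p$. Concretely, after the update the $p$-th row and $p$-th column of $\vek R_{\ell+1}$ are identically zero, and the remaining block of $\vek R_{\ell+1}$ is exactly the Schur complement of the $(p,p)$ entry in $\vek R_\ell$. I would make this explicit by permuting $p$ to the first index, writing $\vek R_\ell = \begin{pmatrix} \gamma_\ell & \vek b^\top \\ \vek b & \vek C \end{pmatrix}$ (using symmetry of $\vek R_\ell$, which itself must be tracked inductively — the update is symmetric since the pivot is diagonal, so symmetry is preserved), and computing $\vek R_{\ell+1} = \begin{pmatrix} 0 & \vek 0^\top \\ \vek 0 & \vek C - \vek b\vek b^\top/\gamma_\ell \end{pmatrix}$. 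Since $\vek R_\ell \succeq 0$ and $\gamma_\ell > 0$, the Schur complement $\vek C - \vek b\vek b^\top/\gamma_\ell$ is positive semi-definite by the standard Schur-complement criterion (or, if one prefers to stay within the tools quoted in the excerpt, by Lemma~\ref{lemma3}: every principal minor of the Schur complement equals a ratio of a principal minor of $\vek R_\ell$ containing row/column $p$ to $\gamma_\ell$, hence is nonnegative). Appending a zero row and column preserves SPSD-ness, so $\vek R_{\ell+1}\succeq 0$, completing the induction.

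I would then remark that the argument simultaneously shows the remainder matrices stay symmetric (needed so that Lemma~\ref{lemma1} continues to apply at the next iteration, guaranteeing the next maximal-modulus entry is again found on the diagonal), which is why choosing the pivot on the diagonal is self-consistent throughout the run. The only real subtlety is the bookkeeping at a zero pivot; the rest is the Schur-complement identity, which is routine once the rank-one update is written in block form. If one wants to avoid even mentioning Schur complements, the induction can be phrased purely through Lemma~\ref{lemma3} by checking nonnegativity of principal minors of $\vek R_{\ell+1}$ directly: any principal submatrix not indexed by $p$ has a determinant that, after one elimination step, equals $\det$ of a principal submatrix of $\vek R_\ell$ (the one additionally indexed by $p$) divided by $\gamma_\ell \geq 0$, and any principal submatrix indexed by $p$ has a zero row, hence determinant zero — so all principal minors of $\vek R_{\ell+1}$ are nonnegative and Lemma~\ref{lemma3} applies.
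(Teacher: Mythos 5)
Your proposal is correct and rests on the same underlying idea as the paper's proof: a rank-one update with a diagonal pivot is a Schur-complement (Gaussian-elimination) step, and positive semi-definiteness is inherited by the Schur complement. The difference is in execution, and yours is the more complete of the two. The paper keeps the pivot in place, writes the updated matrix in a $3\times 3$ block form, proves separately that the two diagonal blocks $\vek X - \frac{1}{p}\vek v\vek v^\top$ and $\vek Y - \frac{1}{p}\vek u\vek u^\top$ are SPSD via a congruence $\vek P^\top\vek Q\vek P$, and then disposes of all remaining principal minors by asserting they contain a zero row and a zero column. That assertion only holds for principal submatrices that include the pivot index; a principal submatrix drawing indices from both the $\vek X$- and $\vek Y$-blocks while avoiding the pivot has no zero row, and the paper's argument as written does not cover it (one would need positive semi-definiteness of the full Schur complement, including the off-diagonal blocks $\vek Z - \frac{1}{p}\vek u\vek v^\top$). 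By permuting the pivot to the front and treating the entire trailing block $\vek C - \vek b\vek b^\top/\gamma_\ell$ as a single Schur complement, you handle all of these submatrices at once. You also explicitly dispose of the $\gamma_\ell=0$ case (the paper divides by $p$ without comment) and track symmetry through the induction, both of which the paper leaves implicit. I see no gaps in your argument.
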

 
\begin{proof}   Suppose a diagonal entry $p$ is used as the pivot,  and $\mat X, \mat Y, \mat Z$ and $\vek u, \vek v$ are  submatrices and vectors in $\mat M$. The remainder matrix $\mat R$ after the first step is
 \begin{align}
    \mat R = \mat M - \mat A \mat A^\top= & \begin{pmatrix} \mat X & \vek v & \mat Z^\top \\ \vek v^\top & p &\vek u^\top\\ \mat Z &  \vek u & \mat Y   \end{pmatrix}
  -\frac{1}{p}
 \begin{pmatrix} 
  \vek v \\ p\\ \vek u
 \end{pmatrix}
 \begin{pmatrix}
  \vek v^\top & p & \vek u^\top 
 \end{pmatrix} \nonumber \\ 
 =&  \begin{pmatrix} \mat X - \frac{1}{p} \vek v \vek v^\top& \vek 0 & \mat Z^\top-\frac{1}{p}\vek v \vek u^\top \\ 
                                      \vek 0^\top & 0 &\vek 0^\top\\ 
                                       \mat Z -\frac{1}{p}\vek u \vek v^\top &  \vek 0 & \mat Y-  \frac{1}{p} \vek u \vek u^\top  \end{pmatrix} \label{M}
 \end{align}
Because $\mat M$ is SPSD, so is its principle submatrix $\begin{pmatrix} \mat X & \vek v   \\ \vek v^\top & p    \end{pmatrix}$, and  we have the relation
  \[
  \begin{pmatrix} \mat X & \vek v   \\ \vek v^\top & p     \end{pmatrix}  =
 \begin{pmatrix} \vek I & \frac{1}{p} \vek v\\ \vek 0^\top&1\end{pmatrix}  
 \begin{pmatrix} \mat X - \frac{1}{p}\vek v \vek v^\top&\vek 0\\ \vek 0^\top & p \end{pmatrix} 
 \begin{pmatrix} \vek I &  \vek 0 \\ \frac{1}{p} \vek v^\top &1\end{pmatrix}  := \mat P^\top \mat Q   \mat P
  \]
 in which the invertibility of $\mat P$ implies that $\mat Q$ is also SPSD \cite[Observation 7.1.8]{Horn2012}. Hence  $ \mat X - \frac{1}{p}\vek v \vek v^\top$ and all its principal submatrices  are nonnegative \cite[Corollary 14.2.12]{harville1997}. And similarly this also holds for  $\mat Y-  \frac{1}{p} \vek u \vek u^\top $ and all its principal submatrices.

From Equation (\ref{M}) it is clear that all principal submatrices of $\mat R$ that are not contained in   $ \mat X - \frac{1}{p}\vek v \vek v^\top$ or $\mat Y-  \frac{1}{p} \vek u \vek u^\top $ (including $\mat R$ itself) have at least one zero column and one zero row thus have zero determinants.   Therefore all principal minors of $\mat R$ are nonnegative, so its positive semidefinitedness follows Lemma \ref{lemma1}, and its symmetry comes from the symmetry of $\mat A\mat A^\top$.  By replacing the $\mat M$ by $\mat R$ we see the SPSD property of $\mat R$ holds after every rank-1 subtraction, this proves the proposition.
   \end{proof}

 By Lemma \ref{lemma1} and Proposition \ref{p1}  we see that in approximating  ill-conditioned SPSD matrices, as the remainder matrix $\mat R$ is SPSD in every step,  it suffices for CA to do the pivot maximization \textit{only} on the diagonal.  This leads to a CA algorithm that achieves a linear complexity $\C O(k^2 n)$ in $n$ for fixed $k$ as in the partially pivoted CA,  but is as accurate as the fully pivoted one. Like in the partially pivoted CA in this new algorithm we  avoid  generating and  updating the whole matrix, also save the storage for it.    The algorithms  yields a  rank-$k$   approximation   in the form $  \mat A    \mat A^\top \approx \mat M$  and the maximum entry-wise error $\varepsilon = \| \mat M - \mat A \mat A^\top \|_{\infty}$,  as detailed in Algorithm~\ref{DCA}. Accuracy of the approximation can be controlled by using an adaptive implementation that terminates as $\varepsilon$ drops  below a given threshold $\epsilon_{tol}$.   
 \medskip
 
 \noindent {\bf Remark 1}: The  maximal entry-wise error of the algorithm as in Proposition~\ref{p1} is just the largest diagonal entry (in modulus) of the remainder matrix $\mat R$, since $\mat R$ is the error matrix and is SPSD. 
 
    \begin{algorithm}[h] \label{DCA}
     \SetKwInOut{Input}{Input}
    \SetKwInOut{Output}{Output}
    
   \caption{ Diagonal  pivoted Cross Approximation}
   
        \Input{A SPSD matrix $\mat M \in \D R^{n \times n}$ (or a function $\E M(i)$ yielding the $i$-th column of $\mat M$),  \\ $k \in \D Z,  1 \leq k\leq n$ (or $\epsilon_{tol} \in \D R $ for an adaptive version) }
     
     \Output{$\mat A \in \D R^{n \times k} $, $\varepsilon \in \D R $ and $\vek i \in \D Z^k$ such that $| \mat M-\mat A \mat A^\top |_\infty \leq \varepsilon$}
     
   \begin{algorithmic}[1]

     \medskip
     
      \STATE Initialize empty $ \mat A   \in \D R^{n \times k }$,  $\vek i \in \D R^{k}$        
    \STATE $ \vek d := \mbox{diag}(\mat M), \; \ell := 1$     
 
     \STATE $ \varepsilon := \max_i \vek d[i]$                                
     \WHILE{ $\ell \leq k$ and $\varepsilon > 0$ (or  \bf while $\varepsilon >  \epsilon_{tol})$ }  
     
      \STATE $   i_\ell := \mbox{argmax}_{i} \left | \vek d[i] \right  |$ 
      \STATE $  \gamma_\ell = \vek d[i_\ell] $ 
        \STATE     $ \mat A[:, \ell] :=  (\mat M[ :, i_\ell] - \sum_{q=1}^{\ell-1} \mat A[:, q] \mat A[q, i_\ell] )/\sqrt{\gamma_\ell}$      \\
        (or replace $\mat M[ :, i_\ell]$ by $\E M(i_\ell)$)
       
        \STATE     $ \vek d = \vek d -  (\mat A[:, \ell])^2   $ 
       \STATE $ \varepsilon :=  | \gamma_\ell|$

       \STATE $  \vek i [\ell] :=   i_\ell$
       \STATE $\ell := \ell+1$
     \ENDWHILE 
   \end{algorithmic}   
 \end{algorithm}
 
  \medskip
 \noindent {\bf Remark 2}: Since the algorithm only works with one single column of  $\mat M$ in each step, it does not require the whole matrix to be generated at once. This is especially beneficial in handling very large matrices that are beyond memory capacity, where the $\mat M$ in the input list of the  algorithm can be replaced by a function $\E M(i)$ that returns only the $i$-th column of $\mat M$.    

   
  Algorithm~\ref{DCA} produces the same result as would a fully pivoted CA(Algorithm \ref{a1}), since it just make the same global maximization  in a more efficient way by taking advantage of SPSD properties and accordingly only updates the relevant entries.  That is to say, though here only the diagonal of $\mat R$ is updated (as in step 8) in each step, once a pivot column is chosen, its ``owed'' updating is redeemed (as in step 7). So the error bound mentioned in Remark 1 also holds for Algorithm \ref{DCA} and the maximum entry-wise error is just the maximum of $\vek d$.
 
  Notice that selecting the pivot as the maximum in modulus  in the diagonal is also proved optimal in \cite{Bach2005}  in terms of maximizing the lower bound of gain in each rank-1 approximation  and in \cite{Harbrecht2012} in term of minimizing the trace norm error.

\section{Pivoted Cholesky approximations} \label{sec:PCD}
 
   In Equation (\ref{M}) we see  that in each step the CA algorithm leaves one additional zero column and zero row in the remainder matrix, so that the $j$-th column of $\mat A$ has one more zero entry than the $(j-1)$-th, hence $\mat A$ is just a  row permutation away from a triangular matrix.  Appending a row permutation to Algorithm~\ref{DCA} leads to  a pivoted Cholesky decomposition,  $\widetilde {\mat M} \approx \vek L \vek L^\top$ with $\vek L$ a $n$-by-$k$ lower triangular matrix and $\widetilde {\mat M}$ a symmetric permutation of $\mat M$ based on an index $\vek p$ yielded by the algorithm, as detailed in Algorithm~\ref{ICD} and diagrammed in the upper part of Figure~\ref{fig:PCD_alg3}.  The algorithm also produces a $k$-by-$k$ triangular matrix $\vek L_*$ that exactly reproduces the submatrix $\mat M_*$ which is the cross of the pivoted rows and columns. The reduced factorization $ \mat M_* = \vek L_* \vek L_*^\top$ is useful in solving rank deficient  systems as explained in Section~\ref{sec:kernel}.
    
The difference of this algorithm and the other PCD algorithms in \cite[p.255]{Fine2001}, \cite[p.20]{Bach2003} and  \cite{Harbrecht2012} is that this algorithm provides an entry-wise error bound $\varepsilon = \| \widetilde {\mat M} - \vek L \vek L^\top\|_\infty$, while in other algorithms the bound of error is given in terms of sum of eigenvalues. And this algorithm is relatively simpler since it involves neither row or column permutations during the iteration, nor nested entry indexing.

 
    \begin{algorithm}[h] \label{ICD}
    \SetKwInOut{Input}{Input}
    \SetKwInOut{Output}{Output}    
   \caption{Low rank pivoted Cholesky decomposition based on CA}
  
     \Input{A SPSD matrix  $\mat M \in \D R^{n \times n}$,  $k \in \D Z,  1 \leq k\leq n$ (or $\epsilon_{tol} \in \D R $ for an adaptive version)}     
     \Output{$\vek L \in \D R^{n \times k} $,   $\vek L_* \in \D R^{k \times k} $, $\varepsilon \in \D R $, $\vek i \in \D Z^k$ and $\vek p \in \D Z^n$ \\ such that $|\mat M[\vek p,\vek p]-\vek L \vek L^\top |_\infty \leq \varepsilon$ and $\mat M[\vek i,\vek i] = \vek L_* \vek L_*^\top$ }
      \begin{algorithmic}[1]
     \medskip
      
      \STATE Run Algorithm \ref{DCA} to obtain  $\mat A \in \D R^{n \times k} $, $\varepsilon \in \D R $ and $\vek i \in \D Z^k$
       \STATE $   \vek j  :=  \{ 1,2, \cdots, n\} \setminus \vek i$  \COMMENT{complement of $\vek i$}
        \STATE $  \vek p:=  [\vek i \;\; \vek j ]$    \COMMENT{concatenation of $\vek i$ and $\vek j $}
         \STATE $ \vek L := \mat A[\vek p, :] $  \COMMENT{take rows according to index $\vek p$   }
            \STATE $ \vek L_* := \mat A[\vek i, :] $  \COMMENT{take rows according to index $\vek i$   }
   \end{algorithmic}   
 \end{algorithm} 

The convergence of PCD is proved to be exponentially fast  in $k$ if the function underlies the matrix $\mat M$ has exponentially decaying eigenvalues  \cite[Theorem 2]{Harbrecht2012}. This kind  of function is not rare \cite{Schwab2006}. 

  \begin{figure}[htbp!]   
  \hspace{2cm} \includegraphics[width=6.6cm]{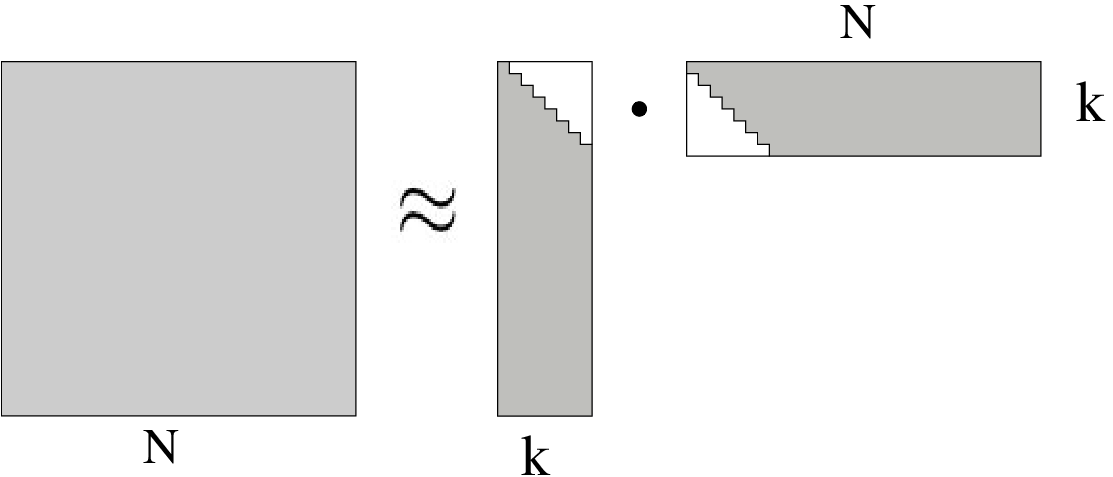}   
 \caption{Algorithm~\ref{ICD}.  Unshaded zones depict zero entries.   Upper: $\widetilde {\mat M} \approx \vek L \vek L^\top $. Lower: $ \mat M_* =  \vek L_* \vek L_*^\top $. }
 \label{fig:PCD_alg3}
 \end{figure}
 
   \begin{figure}[htbp!]    
  \hspace{2cm} \includegraphics[width=8cm]{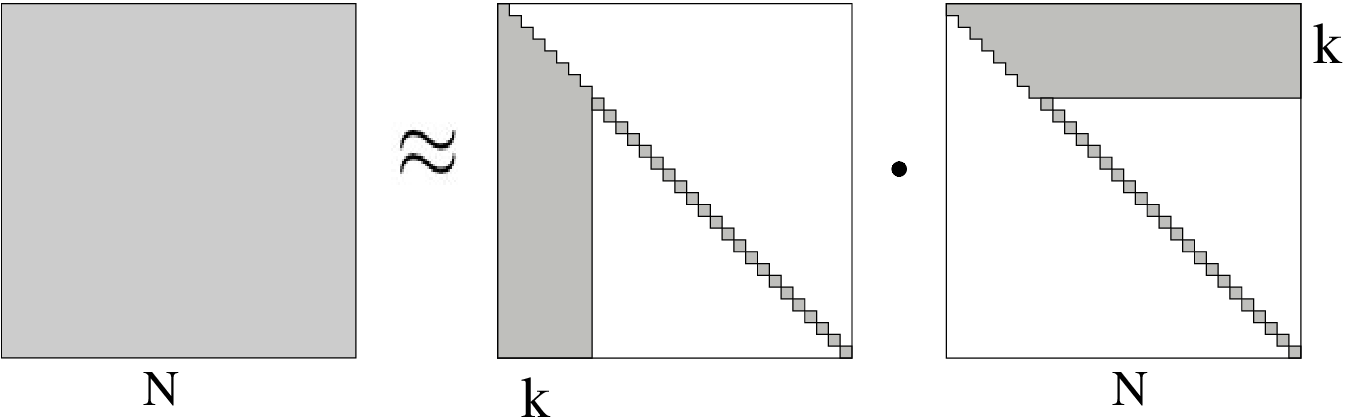}  
 \caption{Algorithm~\ref{ICD2},  $\widetilde {\mat M} \approx \vek L_n \vek L_n^\top $.  Unshaded zones depict zero entries. }
 \label{fig:PCD_alg4}
 \end{figure}
 
 The accuracy of Algorithm \ref{ICD} can be improved by a slight variation  with minor extra cost. The variant yields a full rank factorization,  $\widetilde {\mat M} \approx \vek L_n \vek L_n^\top$ with $\vek L_n$ a $n$-by-$n$ matrix, by filling the empty diagonal entries of $\vek L$ with the square root of the non-pivoted diagonal entries of $\mat R$ at the end of the procedure. This is detailed in  Algorithm \ref{ICD2} and  diagrammed in   Figure~\ref{fig:PCD_alg4}.

     \begin{algorithm}[h]      \label{ICD2}
    \SetKwInOut{Input}{Input}
    \SetKwInOut{Output}{Output}    
    
   \caption{Full rank pivoted Cholesky decomposition}
   
      \Input{A SPSD matrix  $\mat M \in \D R^{n \times n}$,  $k \in \D Z,  1 \leq k\leq n$ (or $\epsilon_{tol} \in \D R $) }     
     \Output{$\vek L_n \in \D R^{n \times n} $, $\varepsilon \in \D R $, $\vek p \in \D Z^n$  
     such that $|\mat M[\vek p,\vek p]-\vek L_n \vek L_n^\top |_\infty \leq \varepsilon$ }  
   
   \begin{algorithmic}[1]

     \medskip
      
      \STATE Run Algorithm \ref{DCA} with a slight variation that initializes $\mat A $ as an n-by-n matrix to obtain  $\mat A \in \D R^{n \times n} $, $\vek d \in \D R^n$, $\varepsilon \in \D R $, $\vek i \in \D Z^k$ and $\ell \in \D Z$
       \STATE $  \vek j :=  \{ 1,2, \cdots, n\} \setminus \vek i$  \COMMENT{complement of $\vek i$ in $\{ 1,2, \cdots, n\}$ }
       \STATE $  \vek q :=  \{ \ell, \ell+1, \cdots, n\}  $  
       \STATE $  \mat A[\vek j, \vek q]:=     \sqrt{\vek d[\vek j]}$    \COMMENT{fill the unspecified diagonal entries}
        \STATE $  \vek p:=  [\vek i \;\; \vek j]$    \COMMENT{concatenation of $\vek i$ and $\vek j$}
         \STATE $ \vek L_n:= \mat A[\vek p, :] $  \COMMENT{take rows indexed by $\vek p$   }
   \end{algorithmic}   
 \end{algorithm}
  
Apart from the exact reproduction of  the pivot columns and rows as do  all CA and PCD algorithms\cite[Lemma 4.5]{boerm2003},  Algorithm~\ref{ICD2} in addition exactly reproduces the diagonal entries of the original matrix (this can be easily seen by the all-zero diagonal of $\mat R = \widetilde {\mat M} - \vek L_n \vek L_n^\top$) while the off-diagonal part remains the same as that from Algorithm~\ref{ICD}.  $\varepsilon$ in Algorithm~\ref{ICD2} is a loose upper bound, rather than the exact one in Algorithm~\ref{ICD},  of the entry-wise error.

 
 


In case of solving rank deficient system the submatrix factorization  $\mat M_* = \vek L_* \vek L_*^\top$ is useful. If the system is consistent it is sufficient to solve a reduced system of $\mat M_*$, with only an $\C O(k^2)$ complexity  in addition to the cost of PCD. This is detailed in the next section.

\section{Solving large  kernel systems} \label{sec:kernel}

Large kernel matrices are prone to be ill-conditioned or rank-deficient, and it is also well-known that  a well-performing kernel system  usually associates with an ill-conditioned matrix. Like an echo of the \textit{Uncertainty Principle} in quantum mechanics, in \cite{Schaback1995} it was stated that the condition number and the accuracy cannot be both good.   Regularization proportional to the identity matrix  is usually the cure in this situation. But we would show here PCD can solve these systems more efficiently.

Let us take radial basis functions (RBF) as an example.  Consider modelling a function $f: \Omega \rightarrow \D R$ on some compact domain $\Omega \subseteq \D R^d$ by a linear combination  of radial basis functions $\phi: \D R_+ \rightarrow \D R$ each centered at one of the points in a scattered and distinct set $\mat X =\{\vek x^{(1)}, \cdots, \vek x^{(n)} \} \subset \Omega$:
\begin{equation}
 f(\vek x) \approx s(\vek x) = \sum_{i=1}^n w_i \; \phi_i(\vek x) ,  \;\; \mbox{with}\;\; \phi_i(\vek x) = \phi(\|\vek x - \vek x^{(i)}\|)   \label{RBF_1}
\end{equation}  
where $ \|\cdot \|$ denotes Euclidean norm. The coefficients $\vek w=\{w_1,\cdots, w_n \}$ are to be  determined by fitting $s(\vek x)$ to $n$ samples of $f$ at $\mat X$, i.e.
\begin{equation}
\vek \Phi \vek w = \vek f  \label{RBF_2},
\end{equation}  
where $\Phi_{ij} = \phi_i(\vek x^{(j)})$ and $f_i = f(\vek x^{(i)})$. The  matrix $\vek \Phi$ should be made positive definite   by a proper choice of $\phi$ and a distinct point set $\mat X$  thus (\ref{RBF_2}) has a unique solution.

However, a large condition number could render $\vek \Phi$   numerically singular. This happens often when  we have a larger $n$ and/or use smoother RBFs (all in  the hope to increase accuracy). 
In the work \cite [Theorem 5]{Wathen2015} it is shown if $\phi \in C^{\nu -1}$ and its $\nu$-th derivative is of bounded variation, then its $n$-th eigenvalue decays at least in the order $\mathcal O(n^ {-\nu-1/2})$.
This is the reason smooth radial basis functions often result  in ill-conditioned system  matrices even with a moderate $n$.  We also experimentally observed in approximating the matrices with PCD that as $n$ increases the $k$ (corresponds to number of eigenvalues above machine epsilon) would stabilize at a fixed value (which makes the complexity $\C O(k^2n)$ linear in $n$).

If the system matrix $\mat \Phi$ has only $k$ eigenvalues that are above machine epsilon, it is reasonable to use only $k$ instead of $n$  $\phi_i$ in the approximation~(\ref{RBF_1}).  The PCD in Algorithm~\ref{ICD} is the choice for this purpose. Taking $\epsilon_{tol}$ as   the machine epsilon it picks out the $k$ chosen $\phi_i$ (indexed by $\vek i$, one of its outputs) so that we can solve for the corresponding $k$ coefficients $w_i$ in~(\ref{RBF_1}) by a much reduced system using only a $[\vek i, \vek i ]$-indexed submatrix $\vek \Phi_*$: 
 \begin{equation}
\vek \Phi_* \vek w_* =  \vek L_*  \vek L_*^\top \vek w_* = \vek f_*  \label{RBF_3}
\end{equation}   
with $\vek f_*$ the  $\vek i$-indexed subset of $\vek f$, and leave the $n-k$ remaining coefficients zero.  This leads to a solution $\widetilde  {\vek w} = [\vek w_* , \vek 0 ]^\top$. This triangular system costs only $\C O(k^2)$ flops to solve. 

If  the original system  (\ref{RBF_2}), despite the rank-deficiency, is consistent, $\widehat {\vek w}$ is its solution to the machine precision. This can be seen in the following.
Let us  write the symmetrically $\vek p$-permuted $\vek \Phi$  in a block-form:
\[
        \widetilde {\vek \Phi} 
   =
    \begin{bmatrix}
      \vek \Phi_* & \mat Z^\top           \\[0.3em]
      \mat Z & \mat Y  \\[0.3em] 
     \end{bmatrix}    
\]
 and define
\[
     \vek N =  
      \begin{bmatrix}
      - \vek \Phi_*^{-1}\mat Z^\top           \\[0.3em]
      \vek I    \\[0.3em]       
     \end{bmatrix}   ,
\;\;\;
   \mat R_Y = \mat Y - \mat Z   \vek \Phi_*^{-1} \mat Z^\top .  
 \]
By Lemma \ref{lemma0}    $\mat R_Y$ vanishes as $\epsilon_{tol} \longrightarrow 0$, and $\|\mat R_Y \|_\infty = \epsilon_{tol}$.      The  inverse of $\widetilde {\vek \Phi}$ in block form is \cite{Brookes2011}\cite[p.25, Eq. 0.8.5.6]{Horn2012}\footnote{In this referenced equation, the $A_{11}$ is a typo of $A_{11}^{-1}$}: 
\begin{align}
        \widetilde {\vek \Phi}^{-1}
   =
    \begin{bmatrix}
      \vek \Phi_* & \mat Z^\top           \\[0.3em]
      \mat Z & \mat Y  \\[0.3em] 
     \end{bmatrix}^{-1}    
     & =   \begin{bmatrix}
      \vek \Phi_*^{-1} & \vek 0           \\[0.3em]
      \vek 0  &  \vek 0   \\[0.3em] 
     \end{bmatrix}             + \vek N \mat R_Y^{-1} \vek N^\top. \label{block_inverse}
\end{align}
 The second term in the right hand side is the culprit for the instability, as $\epsilon_{tol}$ vanishes, in case of rank-deficiency it overflows (and in case of full-rank the term would be null).  
 
 If we drop the second term the solution is   just  $\widetilde {\vek w}$, in this case we have
\begin{align}
        \widetilde {\vek \Phi} \widetilde  {\vek w}   =\begin{bmatrix}
      \vek f_*       \\[0.3em]
      \mat Z  \vek \Phi_*^{-1} \vek f_*   \\[0.3em] 
     \end{bmatrix} 
     \underset{\mathrm{if \,\eqref{RBF_2} \, consistent}}{=\joinrel=} \widetilde {\vek f} ,
     \label{solution}
\end{align} 
where the second equivalence holds if the original system  (\ref{RBF_2})   is consistent.
 
 This is analogical to the scenario   we solve by a  singular value decomposition (SVD) of $ \widetilde {\vek \Phi}$ and replace the reciprocals of very small eigenvalues replaced by zeros,  which produces the solution with the smallest $L_2$ norm to the underdetermined system \cite[p. 69]{press2007}.  Comparing to that, the PCD solution has a smaller complexity of $\C O(k^2 n)$ at the sacrifice of non-orthogonality of the ``selected bases''. 

For the rank-deficient systems the above solution also costs less than a Sherman-Morrison-Woodbury\cite[p.50]{Golub1996} inversion of $\lambda \vek I + \vek L_*\vek L_*^\top$ with $\lambda$ a regularization. Besides the PCD cost,  the former requires additional $\C O(k^2)$ flops for the solution while the latter requires $\C O(k^2 n)$.   
 

\section{Applications}  \label{sec:applications}
We exemplify two applications of the proposed diagonal pivoted Cross Approximation algorithm (Algorithm~\ref{DCA}) and pivoted Cholesky decomposition algorithm (Algorithm~\ref{ICD}).
 
\subsection{Eigen-decomposition of large matrices}

For aerodynamic robust design and uncertainty quantification the geometric uncertainties of aircraft are often modelled by random fields using a Karhunen-Lo\`{e}ve expansion (KLE) which requires an eigen-decomposition of the underlying covariance matrix. If the random field has a large number of nodes the matrix is often so huge that the eigen-decomposition may become  prohibitive for commonly available computing resources. In \cite{khoro2009} a  hierarchical low rank approximation technique is used to reduce the cost.  In this example we show how  the  relatively simpler diagonal pivoted cross approximation  (Algorithm~\ref{DCA}) can do the same job for KLE with continuous covariance functions.

Consider the wing surface as shown in the left part of  Figure~\ref{fig:wing} that is discretized into  56312 mesh nodes $p_i$ and  assumed   subject to zero-mean Gaussian random perturbations. Due to engineering reasons the standard deviation ($\sigma$) of the  perturbation is  $p_i$-dependent as shown in the right part of Figure~\ref{fig:wing}.  The correlation of the perturbations on any $(i,j)$ pair of node $p_k=(x_k, \,y_k,\, z_k)$ is assumed of Gaussian type:
 \[
r(p_i, p_j) = e^{(x_i-x_j)^2/\theta_x^2 + (y_i-y_j)^2/\theta_y^2 + (z_i-z_j)^2/\theta_z^2}
\]
with the correlation length $\vek \theta=(0.1,0.2,0.01)$.   This function is positive definite so that guarantees the positive definitedness of the corresponding covariance matrix.

  \begin{figure}[htbp!]  
 \centering
\includegraphics[width=5.5cm]{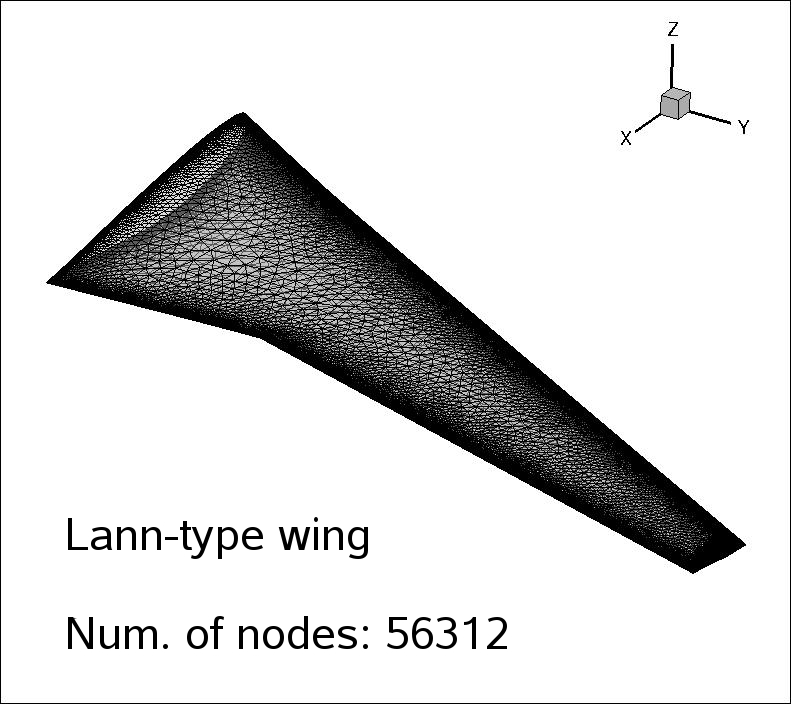}  \hspace{0.2cm}
\includegraphics[width=5.5cm]{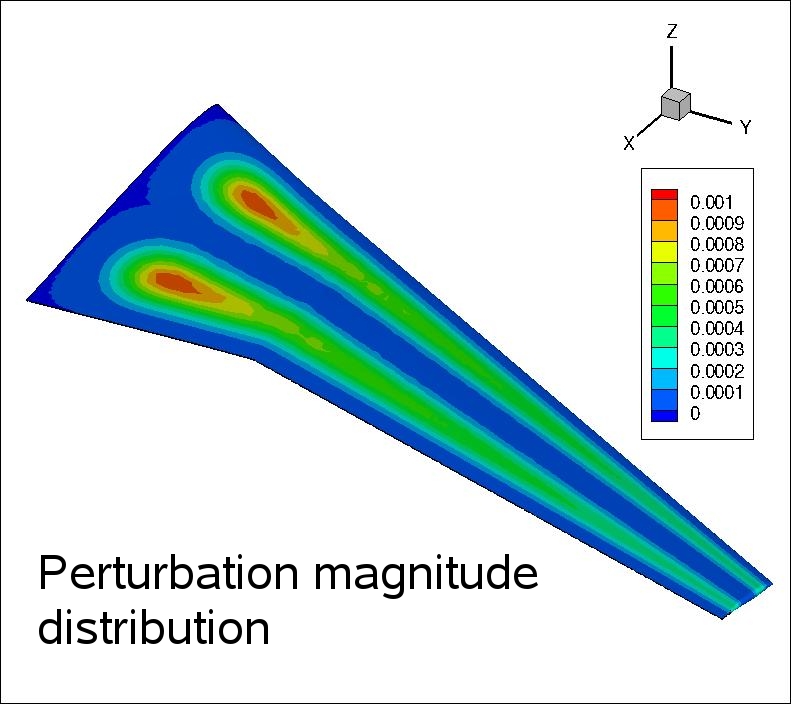} 
\caption{ Mesh of wing surface (left) and $\sigma$ distribution (right) }
\label{fig:wing}
\end{figure} 

By the above settings, a covariance matrix $\vek C$ of size $n=56312$ is composed by $c_{i,j}=\sigma(p_i) \sigma(p_j) r(p_i, p_j)$.  Suppose $\lambda_\alpha$ and $\varphi_\alpha$ are eigenvalues and eigenvectors of  $\vek C$, a parameterization and approximation of the random field  is  given by a truncated Karhunen-Lo\`{e}ve expansion (KLE): 
\begin{equation}
 \C R(x, y, z)  \approx \sum_{\alpha=1}^{k'}\xi_\alpha \;  \sqrt{\lambda_\alpha} \; \varphi_\alpha(x,y,z) \label{KLE}
\end{equation}                                                                                                 
where $\xi_\alpha$ are independent standard Gaussian random variables, and $k'$ usually much smaller than $n$.

However,   a full eigen-decomposition  is usually  not only very expensive but also not necessary, because $\vek C$ is often large (sized more than 25 GB in our case) and numerically rank-deficient due to the high degree of smoothness of the Gaussian correlation function.   We apply a diagonal pivoted cross approximation  (as in Algorithm~\ref{DCA}) to the  symmetric positive  definite matrix $\vek C$ with $k$=600 which yields an approximation $\mat A \mat A^\top \approx \vek C$ with $\mat A \in \D R^{56312 \times 600}$ associated with  a maximum entry-wise error  $\varepsilon$=1.05e-16. 

After this   approximation the eigen-decomposition can be obtained as follows. First one makes a QR decomposition   $\mat A = \mat Q_A \mat R_A$, followed by a singular value decomposition (SVD)   $\mat R_A \mat R_A^\top = \vek U \vek \Lambda \vek U^\top$. Then the diagonal of $\vek \Lambda$ contains the eigenvalues of $\mat A \mat A^\top$   and the matrix $\vek \Phi = \mat Q_A \vek U$ contains the eigenvectors. 

Notices that the complexity of the QR decomposition and of the SVD are $\C O(k^2 n)$ and $\C O(k^3)$ respectively, much smaller than the $\C O(n^3)$ complexity of a direct eigen-decomposition. On a 3.5GHz processor, the  cross approximation takes about 12 seconds  and the successive eigen-decomposition about 47 seconds, while  a direct eigen-decomposition takes about 38 hours.  This means that the low rank approximation brings a speed-up of more than 2000 times. 


With the obtained eigenpairs we implement the KLE in equation~(\ref{KLE}) with $k' = k$ which generates a random field $\C R$ parameterized by 600 Gaussian variables. Imposing the perturbation $\C R$ to the direction normal to the wing surface we obtain randomly deformed wing geometries of which Figure~\ref{fig:deform} displays three examples.

 \begin{figure}[htbp!]  
\centering
\includegraphics[width=3.8cm]{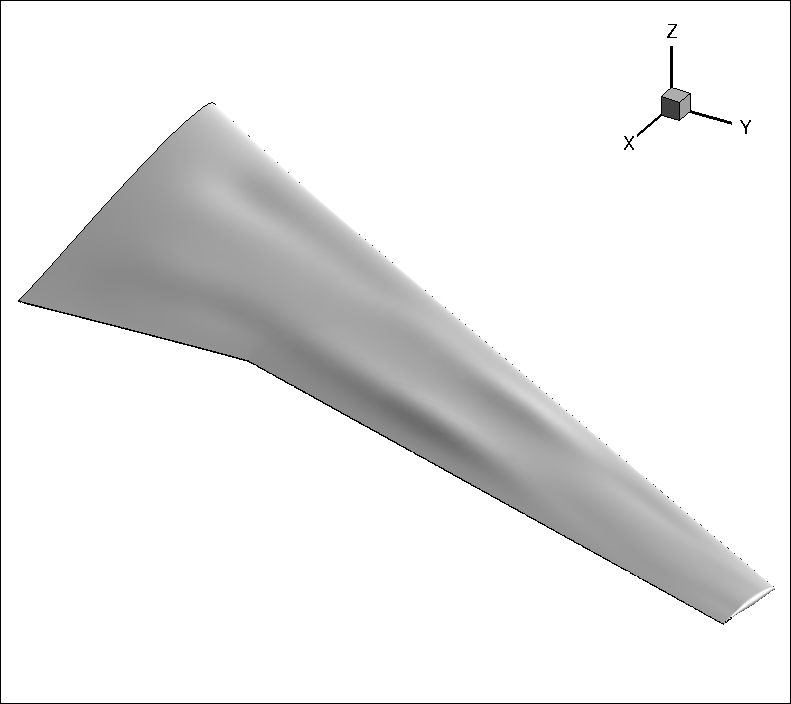}  
\includegraphics[width=3.8cm]{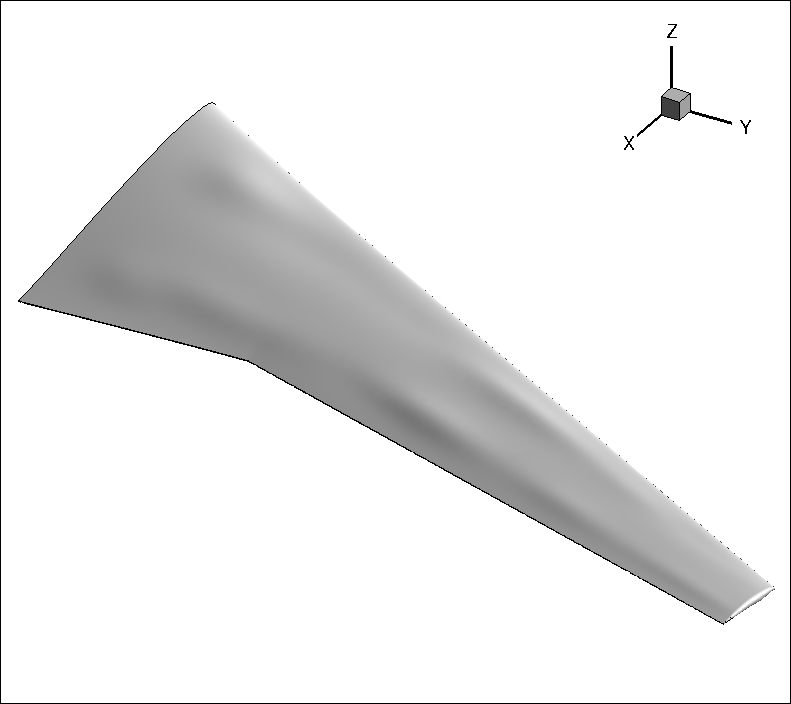}  
\includegraphics[width=3.8cm]{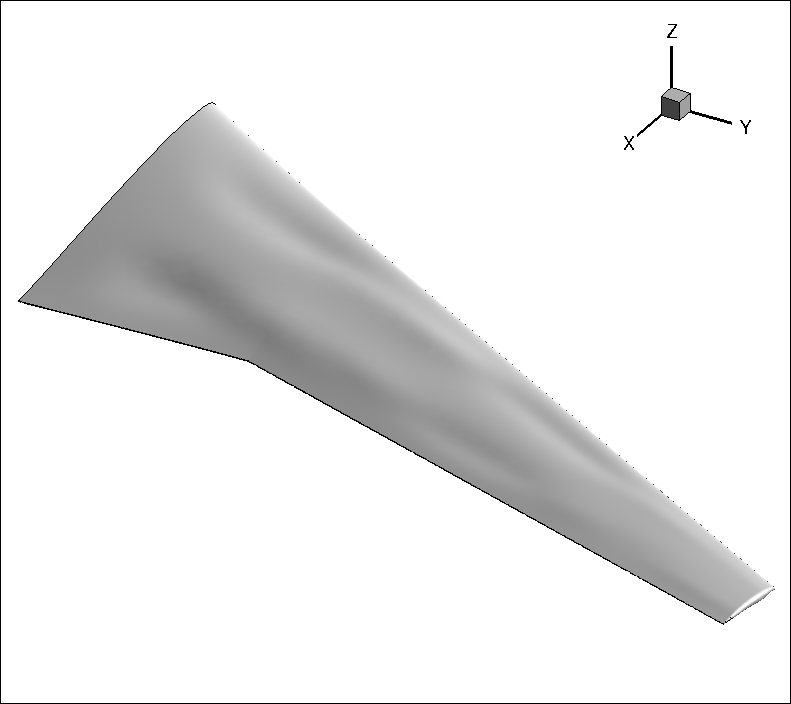}   
\caption{Three examples of randomly deformed wing (deformation three times exaggerated for illustration)}
\label{fig:deform}
\end{figure}
\subsection{Solving  radial basis functions system} 

Consider  to approximate the  following 1D and 2D test functions,
\[
 f_1 = (6x-2)^2 \sin(12x-4), \;\; x \in [0, 1]    
\]
\[
 f_2 = (1-x_1)^2  + 100(x_2-x_1^2)^2, \;\;  \vek x \in [-1, 1]^2  \;\;\;\mbox{(Rosenbrock function)}
\]
by the linear combination of radial basis functions (RBF)   in (\ref{RBF_1}) with a Gaussian type RBF
\begin{equation}
\phi(\|\vek x - \vek x^{(i)}\|) = e^{ -\|(  \vek x -  \vek  x^{(i)}) /  \vek \theta \|^2  } \label{RBF_4}
\end{equation}  
 in which $ \vek x^{(i)}$ is a sampled point and $\vek \theta $  a shape parameter. This type of RBF is superior to compactly-based ones in approximating smooth functions but prone to be ill-conditioned.
 
  \begin{figure}[htbp!]  
  \centering 
 \includegraphics[width=6.0cm]{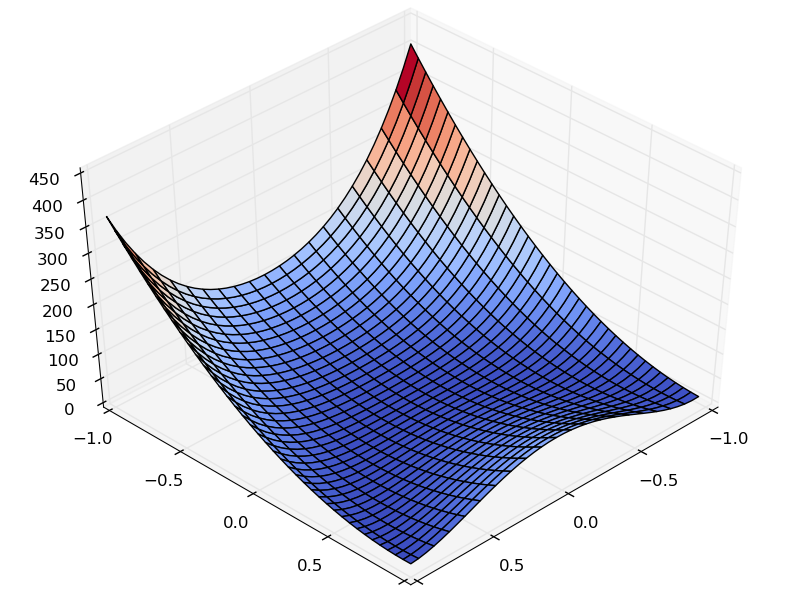}   
\caption{2D Rosenbrock function ($f_2$)}
 \label{fig:rosenbrock}
 \end{figure}

We compare the approximation based on the PCD in algorithm~\ref{ICD} (abbreviated as RBF-PCD) to that based on the regularized system (\ref{RBF_2}) (abbreviated as RBF-Chol), in their accuracy and cost. The PCD is run with an error tolerance $\epsilon_{tol}= \ell  \cdot \C E(\overline {\Phi_{ij}} )$ in which $\C E(\alpha)$ is the maximum round-off error of operations on floating-point numbers with modulus  $\alpha$ (on our machine, $\C E(1.0) \approx 2.22e$-16), and $\ell$ the number of steps the PCD algorithm executed. The solution of the equation (\ref{RBF_2}) is determined by solving the reduced system (\ref{RBF_3}) and padding zeros to the undetermined entries. In RBF-Chol the amount of regularization  is $n  \cdot \C E(\overline {\Phi_{ij}} )$.



For the 1D function $f_1$ the training points are chosen on the basis of mid-point rule, i.e. $\mat X = \{\frac{1}{2n}, \frac{3}{2n}, \cdots, \frac{2n-1}{2n} \}$. The accuracy is measured  in the root mean square error (RMSE) on 10000 test points sampled by the same rule.  We first investigate the accuracy with the shape parameter $\theta$ varies in $[0.001, 1.5]$ (which covers the optimal $\theta$ value, i.e. the one leads to the best accuracy) and with $n=50$ and $100$.  An  unregularized system (\ref{RBF_2}) (abbreviated as RBF-LU) is also included here for comparison, which is solved  by a LU decomposition since some $\theta$ values would lead to numerically non-positive definite matrices. This result is  in figure~\ref{fig:error_1D} where the RMSE is displayed on the ordinate on the left while the $k$ value, i.e. the number of utilized  samples, is displayed on the ordinate on the right.

 In figure~\ref{fig:error_1D}  we see that when $\theta$ is near its lower end   (in a well-conditioned zone), the three approximations show no difference in accuracy, i.e. on a numerically full rank system the PCD with $\epsilon_{tol}$ set to machine tolerance leads to the same solution as does a plain Cholesky decomposition.  At larger $\theta$ values  the rank is reduced and RBF-LU  displays instability while  the other two are more stable. Notice  the optimal $\theta$ (those lead to the best accuracy) are all associated with rank-deficiency, though severe rank-deficiency eventually deteriorates the accuracy. For the RBF-Chol  this deterioration is caused by the regularization which is multiplied by the very large solution $\vek w$ in this scenario, while for the RBF-PCD by decrease of utilized samples.
 
Figure~\ref{fig:curves} graphs the RBF-LU and RBF-PCD approximations with $\theta = 0.2$ and 1.0,  $n = 50$ and $100$.  Green dots   depict the  samples used(included in $\vek f_*$) by the RBF-PCD algorithm and red ones the rest. This figure displays vividly that the latter approach produces as accurate or better and stabler result by using the ``key'' samples only.   

To compare the error convergence and time cost along $n$, we first identify a series of optimal $\theta$ for a series of $n$ values, each by a fine grid search,  for  the regularized RBF-Chol and RBF-PCD respectively, and make the RBF approximations with the optimal $\theta$ for each $n$. Figure~\ref{fig:error_varied_n} and \ref{fig:error_varied_n_2D} shows the convergence of RMSE along $n$ and the associated $k$ values  for $f_1$ and $f_2$ respectively. Figure~\ref{fig:time_varied_n} contrasts the  time costs of the two approaches which are averages of 100 runs. 

It is seen the RBF-PCD has comparable or slightly better accuracy than the regularized RBF-Chol, while costs much less time. The computational time of RBF-PCD manifests its complexity $\C O(k^2n)$ which is  nearly linear in $n$ due to the eventually stabilized $k$.

  \begin{figure}[htbp]  
  \centering
  \hspace{-0.5cm}
 \includegraphics[width=7.5cm]{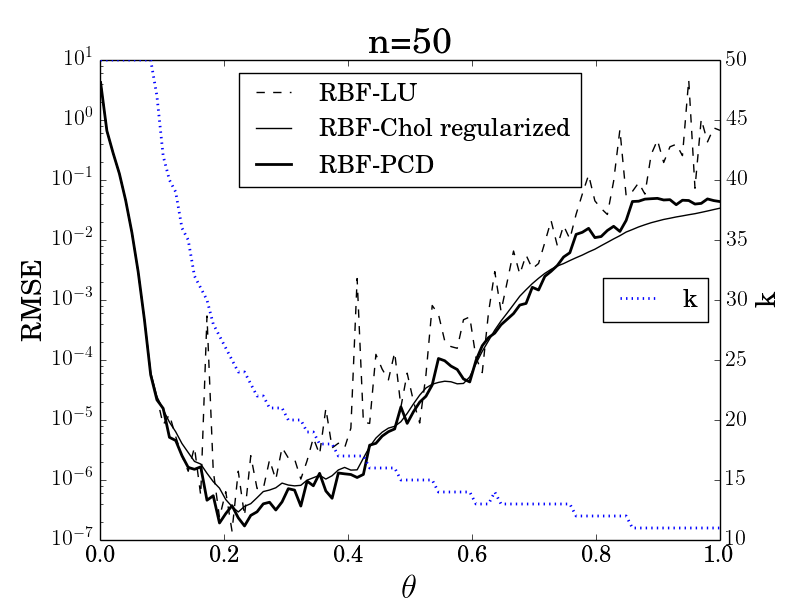}  
 \hspace{-0.2cm}
 \includegraphics[width=7.5cm]{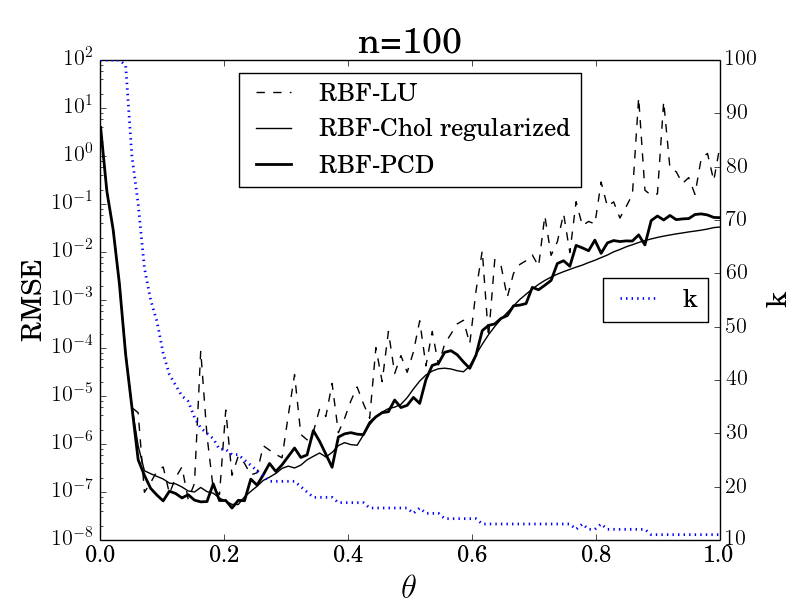}  
 \caption{Error of RBF approximations of $f_1$ along $\theta$, with $n=50$ (left), $100$ (right), and $k$ of  RBF-PCD read by the right axes}
 \label{fig:error_1D}
 \end{figure}

  \begin{figure}[htbp!]  
  \centering
  \includegraphics[width=7.3cm]{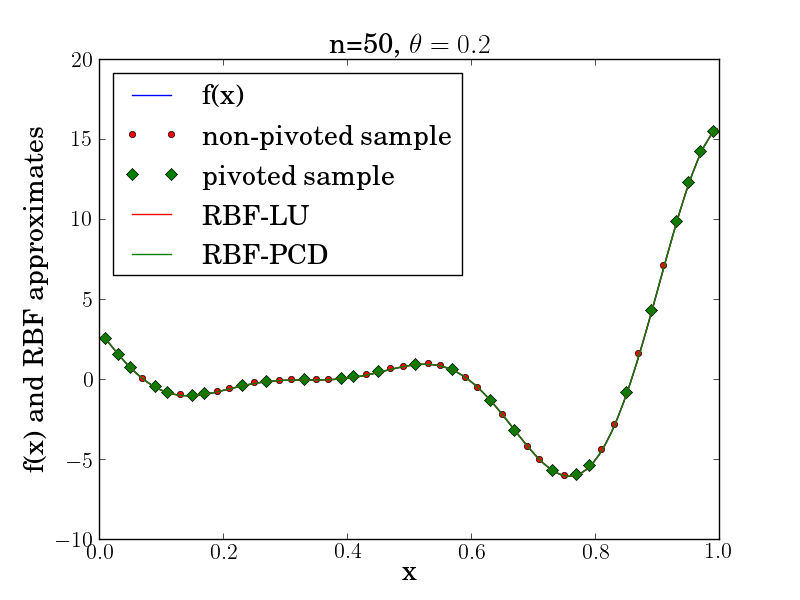}  
 \includegraphics[width=7.3cm]{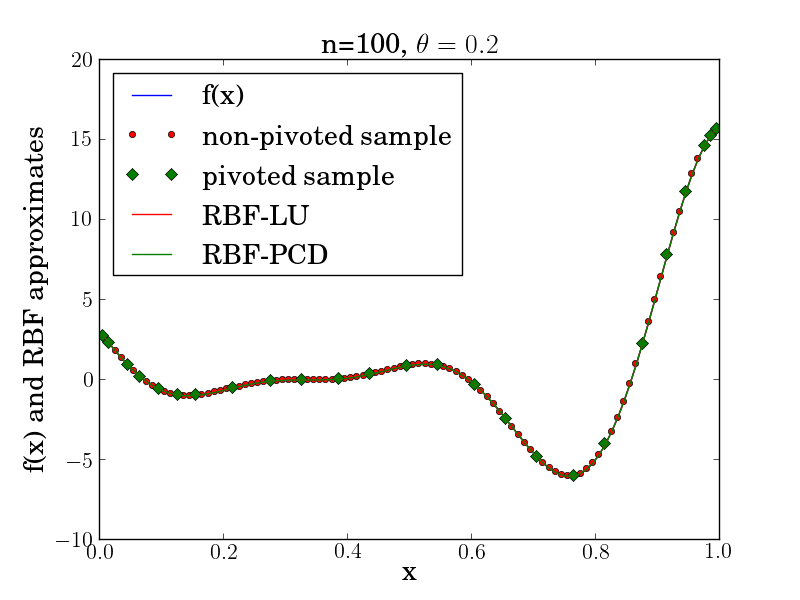}  
 \includegraphics[width=7.3cm]{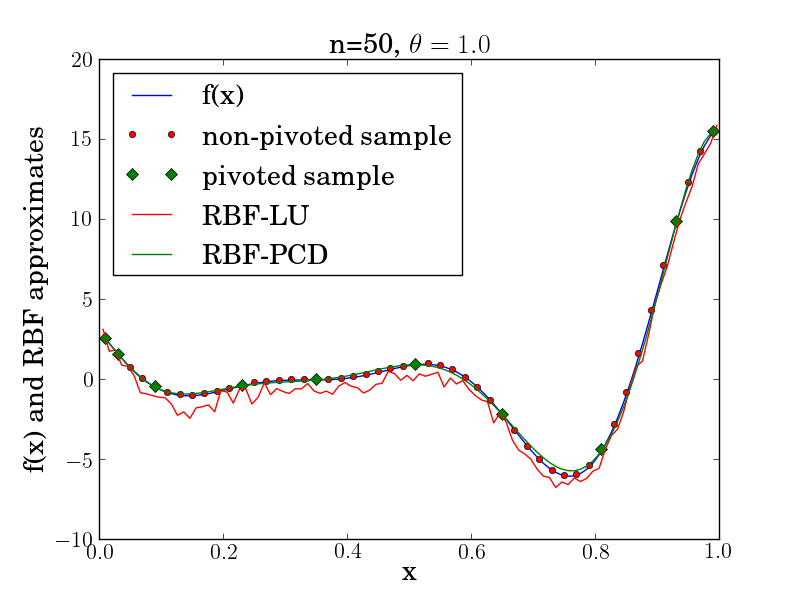}  
 \includegraphics[width=7.3cm]{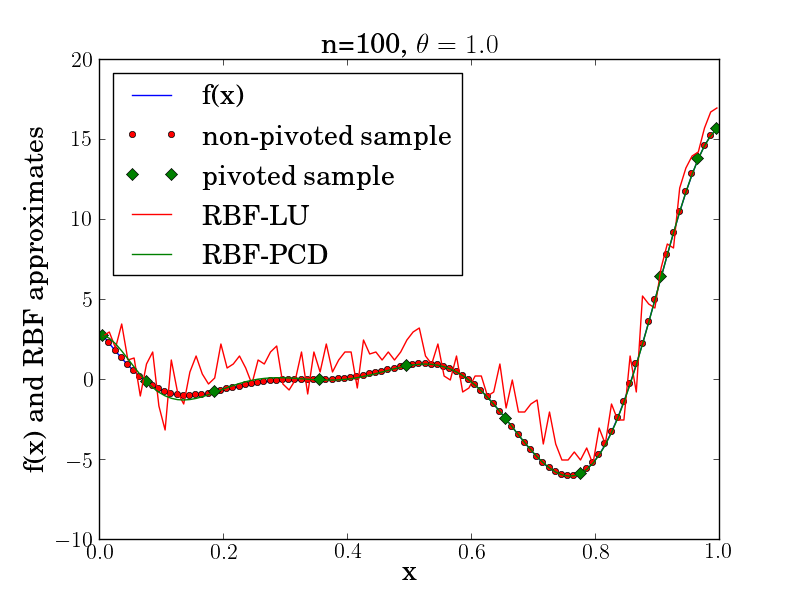}   
 \caption{Stabilizing effect of PCD in RBF approximations, with $n=50$ (left) and $100$ (right), $\theta = 0.2$ (top) and $1.0$ (bottom).}
 \label{fig:curves}
 \end{figure}

  \begin{figure}[htbp!]  
  \centering
  \hspace{-0.5cm}
 \includegraphics[width=7.5cm]{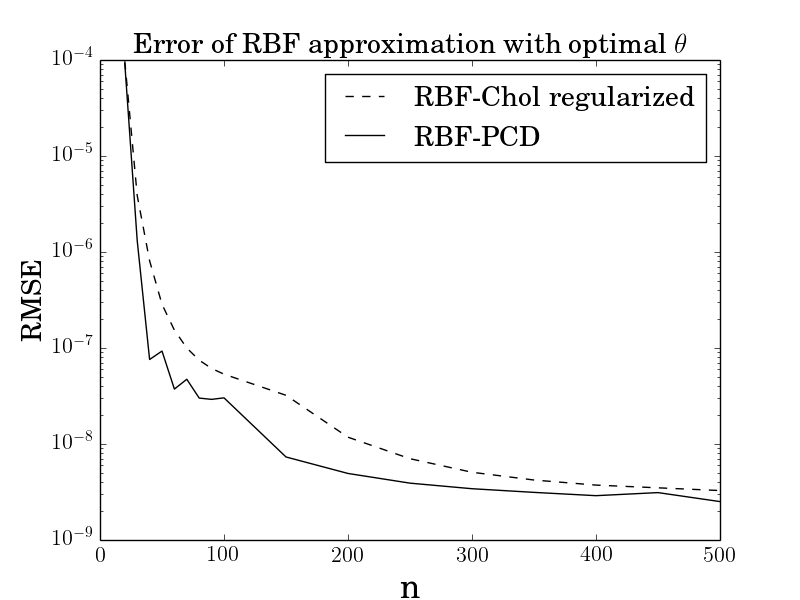}  
 \hspace{-0.2cm}
 \includegraphics[width=7.5cm]{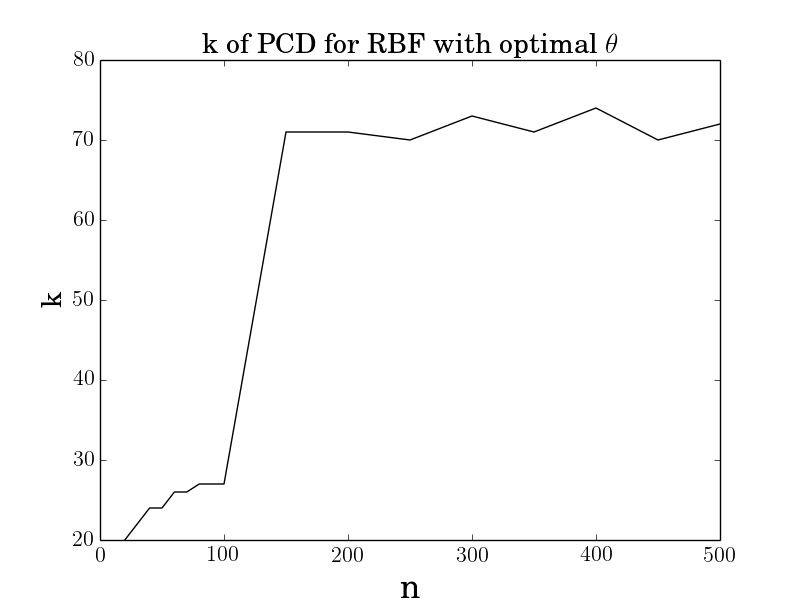}  \\   
\caption{Error of RBF approximations of $f_1$ along $n$ with optimal $\theta$ (left), and the associated $k$ of RBF-PCD (right)}
 \label{fig:error_varied_n}
 \end{figure} 
 
   \begin{figure}[htbp!]  
  \centering
  \hspace{-0.5cm}
 \includegraphics[width=7.5cm]{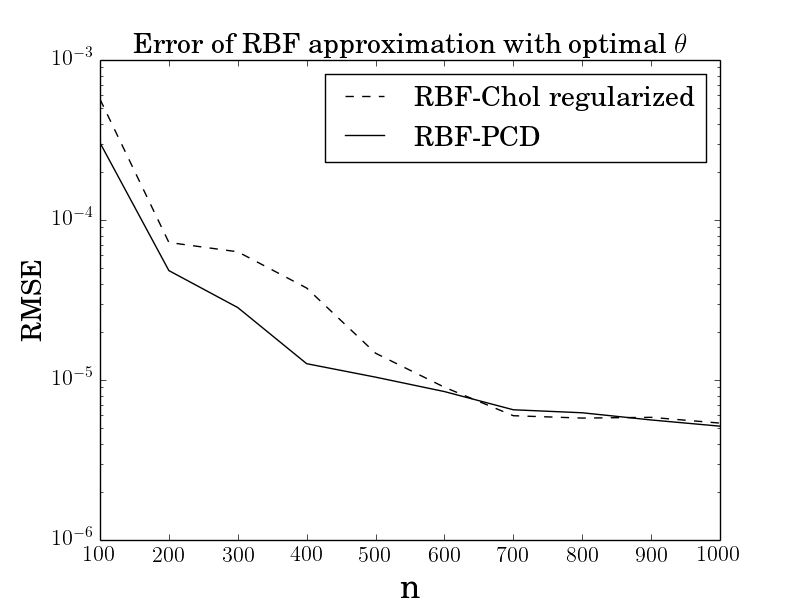}  
 \hspace{-0.2cm}
 \includegraphics[width=7.5cm]{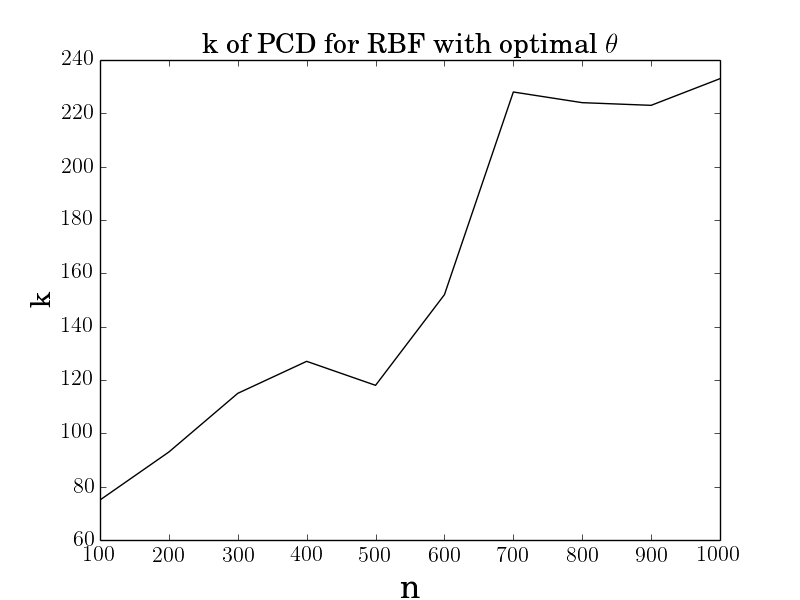}  \\   
\caption{Error of RBF approximations of $f_2$ along $n$ with optimal $\theta$ (left), and the associated $k$ of RBF-PCD (right)}
 \label{fig:error_varied_n_2D}
 \end{figure}
 
   \begin{figure}[htbp!]  
  \centering 
  \hspace{-0.5cm}
 \includegraphics[width=7.5cm]{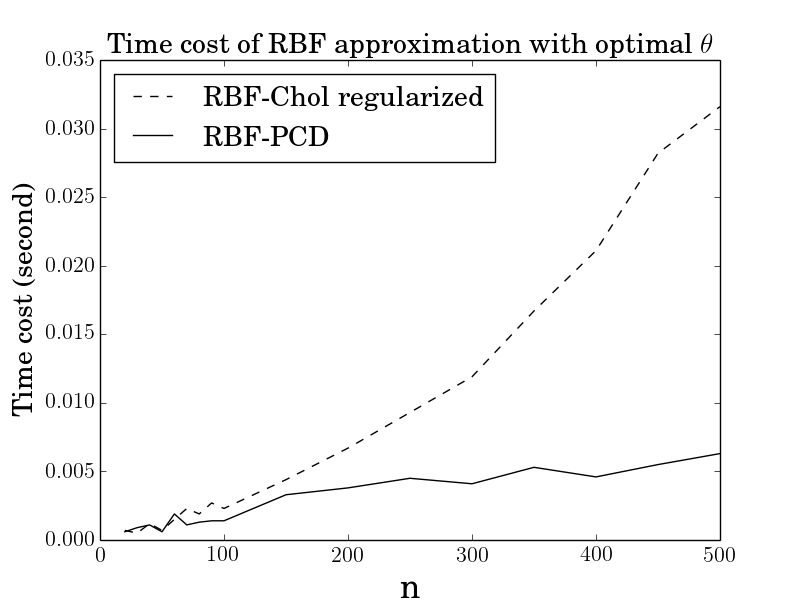}   
  \hspace{-0.2cm}
  \includegraphics[width=7.5cm]{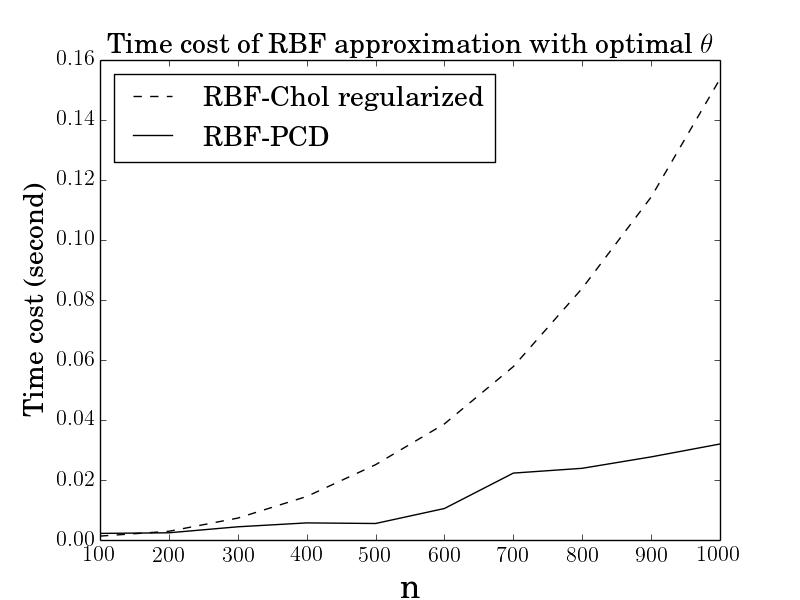}   
\caption{Time cost of RBF approximation of $f_1$ (left) and $f_2$ (right) with optimal $\theta$}
 \label{fig:time_varied_n}
 \end{figure}
 
\section{Summary} \label{sec:summary}
A new pivoted Cholesky decomposition (PCD) algorithm is proposed by tuning the cross approximation (CA) to symmetric positive semidefinite (SPSD) matrices.  The new algorithm gives a sharp bound of entry-wise error. As a by-product a diagonally pivoted CA algorithm is developed for efficient low rank approximation of SPSD matrices. PCD is proved  a valid solver of ill-conditioned systems with complexity  $\C O(k^2n)$,  $k$ is the rank of the system.    

The efficiency advantage of the PCD algorithm is numerically manifested in two  applications to radial basis function approximations.  And the diagonally pivoted CA algorithm is shown to greatly speed up eigen-decomposition of a large covariance matrix in an uncertainty quantification problem.

  \bibliographystyle{plain_only_initials_in_first_name}
\pagebreak

\bibliography{my_bib_library}

\begin{thebibliography}{10}

\bibitem{Bach2013}
F.~R. Bach.
\newblock Sharp analysis of low-rank kernel matrix approximations.
\newblock In {\em Proceedings of the International Conference on Learning
  Theory (COLT)}, 2013.

\bibitem{Bach2003}
F.~R. Bach and M.~I. Jordan.
\newblock Kernel independent component analysis.
\newblock {\em The Journal of Machine Learning Research}, 3:1--48, 2003.

\bibitem{Bach2005}
F.~R. Bach and M.~I. Jordan.
\newblock Predictive low-rank decomposition for kernel methods.
\newblock In {\em Proceedings of the 22nd international conference on Machine
  learning}, pages 33--40. ACM, 2005.

\bibitem{bebendorf2000}
M. Bebendorf.
\newblock Approximation of boundary element matrices.
\newblock {\em Numer. Math.}, 86:565--589, 2000.

\bibitem{bebendorf2003}
M. Bebendorf and S. Rjasanow.
\newblock Adaptive low-rank approximation of collocation matrices.
\newblock {\em Computing}, 70(1):1--24, 2003.

\bibitem{boerm2003}
S. B{\"o}rm, L. Grasedyck, and W. Hackbusch.
\newblock Hierarchical {M}atrices.
\newblock Lecture Notes, Max-Planck Institute for Mathematics, Leipzig, 2003.

\bibitem{Brookes2011}
M. Brookes.
\newblock The matrix reference manual, 2011.
\newblock {http://www.ee.imperial.ac.uk/hp/staff/dmb/matrix/intro.html}.

\bibitem{Dongarra1979}
J.~J. Dongarra, J.~R. Bunch, C.~B. Moler, and G.~W. Stewart.
\newblock {\em LINPACK users' guide}, volume~8.
\newblock Siam, 1979.

\bibitem{Drineas2005}
P. Drineas and M.~W. Mahoney.
\newblock On the {N}ystr{\"o}m method for approximating a {Gram} matrix for
  improved kernel-based learning.
\newblock {\em The Journal of Machine Learning Research}, 6:2153--2175, 2005.

\bibitem{Espig2012}
M. Espig, W. Hackbusch, A. Litvinenko, H.~G. Matthies, and P. W{\"a}hnert.
\newblock Efficient low-rank approximation of the stochastic {Galerkin} matrix
  in tensor formats.
\newblock {\em Computers \& Mathematics with Applications}, 2012.

\bibitem{Fine2001}
S. Fine and K. Scheinberg.
\newblock Efficient {SVM} training using low-rank kernel representations.
\newblock {\em The Journal of Machine Learning Research}, 2:243--264, 2001.

\bibitem{Golub1996}
G.~H. Golub and C.~F.~V. Loan.
\newblock {\em Matrix Computations}.
\newblock JHU Press, 1996.

\bibitem{goreinov2001}
S.~A. Goreinov and E.~E. Tyrtyshnikov.
\newblock The maximal-volume concept in approximation by low-rank matrices.
\newblock {\em Contemporary Mathematics}, 208:47--51, 2001.

\bibitem{goreinov2011}
S.~A. Goreinov and E.~E. Tyrtyshnikov.
\newblock Quasioptimality of skeleton approximation of a matrix in the
  {Chebyshev} norm.
\newblock {\em Doklady Math.}, 83(3):374--375, 2011.

\bibitem{goreinov1997}
S.~A. Goreinov, E.~E. Tyrtyshnikov, and N.~L. Zamarashkin.
\newblock A theory of pseudoskeleton approximations.
\newblock {\em Linear Algebra and its Applications}, 261(1-3):1 -- 21, 1997.

\bibitem{Hammarling2007}
S. Hammarling, N.~J. Higham, and C. Lucas.
\newblock Lapack-style codes for pivoted {Cholesky} and {QR} updating.
\newblock In {\em Applied Parallel Computing. State of the Art in Scientific
  Computing}, pages 137--146. Springer, 2007.

\bibitem{Harbrecht2012}
H. Harbrecht, M. Peters, and R. Schneider.
\newblock On the low-rank approximation by the pivoted {Cholesky}
  decomposition.
\newblock {\em Applied numerical mathematics}, 62(4):428--440, 2012.

\bibitem{harville1997}
D.~A. Harville.
\newblock {\em Matrix Algebra From a Statistician's Perspective}.
\newblock Springer, 1997.

\bibitem{Horn2012}
R.~A. Horn and C.~R. Johnson.
\newblock {\em Matrix analysis}.
\newblock Cambridge university press, 2nd edition, 2012.

\bibitem{khoro2009}
B.~N. Khoromskij, A. Litvinenko, and H.~G. Matthies.
\newblock Application of hierarchical matrices for computing the
  {Karhunen-Lo{\`e}ve expansion}.
\newblock {\em Computing}, 84(1-2):49--67, 2009.

\bibitem{Litvinenko2013}
A. Litvinenko, H.~G. Matthies, and T.~A. El-Moselhy.
\newblock Sampling and low-rank tensor approximation of the response surface.
\newblock In {\em Monte Carlo and Quasi-Monte Carlo Methods 2012}, pages
  535--551. Springer, 2013.

\bibitem{murota2003}
K. Murota.
\newblock {\em Discrete Convex Analysis}.
\newblock Society for Industrial and Applied Mathematics, 2003.

\bibitem{Neumaier1998}
A. Neumaier.
\newblock Solving ill-conditioned and singular linear systems: A tutorial on
  regularization.
\newblock {\em SIAM Review}, 40:636--666, 1998.

\bibitem{press2007}
W.~H. Press et~al.
\newblock {\em Numerical recipes: the art of scientific computing}.
\newblock Cambridge University Press, 3rd edition, 2007.

\bibitem{Savostyanov2009}
D.~V. Savostyanov.
\newblock Fast revealing of mode ranks of tensor in canonical form.
\newblock {\em Numerical Mathematics: Theory, Methods \& Applications},
  2(4):439--444, 2009.

\bibitem{Schaback1995}
R. Schaback.
\newblock Error estimates and condition numbers for radial basis function
  interpolation.
\newblock {\em Advances in Computational Mathematics}, 3(3):251--264, 1995.

\bibitem{Schaback1999}
R. Schaback.
\newblock {Native Hilbert Spaces for Radial Basis Functions I}.
\newblock In M. M\"{u}ller, M. Buhmann, D. Mache, and M. Felten, editors, {\em
  New Developments in Approximation Theory}, volume 132 of {\em ISNM
  International Series of Numerical Mathematics}, pages 255--282.
  Birkh\"{a}user Basel, 1999.

\bibitem{Schwab2006}
C. Schwab and R.~A. Todor.
\newblock {Karhunen--Lo{\`e}ve} approximation of random fields by generalized
  fast multipole methods.
\newblock {\em Journal of Computational Physics}, 217(1):100--122, 2006.

\bibitem{Smola2000}
A.~J. Smola and B. Sch\"{o}kopf.
\newblock Sparse greedy matrix approximation for machine learning.
\newblock In {\em Proceedings of the Seventeenth International Conference on
  Machine Learning}, pages 911--918, 2000.

\bibitem{Wathen2015}
A.~J. Wathen and S. Zhu.
\newblock {On spectral distribution of kernel matrices related to radial basis
  functions}.
\newblock {\em Numerical Algorithms}, 70(4):709--726, 2015.

\bibitem{Williams2001}
C. Williams and M. Seeger.
\newblock Using the {Nystr{\"o}m} method to speed up kernel machines.
\newblock In {\em Proceedings of the 14th Annual Conference on Neural
  Information Processing Systems}, pages 682--688, 2001.

\end{thebibliography}

\end{document}